\newtheorem{Theorem}{Theorem}[section]
\newtheorem{Lemma}[Theorem]{Lemma}
\newtheorem{Proposition}[Theorem]{Proposition}
\theoremstyle{definition}
\def \R{\mathbb R}
\def \p{\partial}
\def\g{\gamma}
\def \g{\gamma}
\def \<{\langle}
\def \>{\rangle}
\def \tM{\widetilde M}
\def \e{\epsilon}
\def \o{\omega}
\def \l{\lambda}
\def \S{\mathbb S}
\def \gxy{\gamma_{x,y,\lambda,\omega}}
\def \xy{x,y,\lambda,\omega}
\def \bdelta{\boldsymbol{\delta}}
\newcommand{\supp}{\operatorname{supp}}
\newcommand{\Diff}{\operatorname{Diff}}
\begin{document}

\title[The local magnetic ray transform of tensor fields]{The local magnetic ray transform of tensor fields}

\author[Hanming Zhou]{Hanming Zhou}
\address{Department of Pure Mathematics and Mathematical Statistics, University of Cambridge, Cambridge CB3 0WB, UK}
\email{hz318@dpmms.cam.ac.uk}


\begin{abstract}
In this paper we study the local magnetic ray transform of symmetric tensor fields up to rank two on a Riemannian manifold of dimension $\geq 3$ with boundary. In particular, we consider the magnetic ray transform of the combinations of tensors of different orders due to the nature of magnetic flows. We show that such magnetic ray transforms can be stably inverted, up to natural obstructions, near a strictly convex (with respect to magnetic geodesics) boundary point. Moreover, a global invertibility result follows on a compact Riemannian manifold with strictly convex boundary assuming that some global foliation condition is satisfied.
\end{abstract}
\maketitle


\section{Introduction}

Given a Riemannian manifold $(M, g)$ and a magnetic field $\Omega$, which is a closed 2-form, we consider the law of motion described by 
\begin{equation}\label{Mag}
\nabla_{\dot\gamma}\dot\gamma=E(\dot\gamma),
\end{equation}
where $\nabla$ is the Levi-Civita connection of $g$ with the Christoffel symbols $\{\Gamma^i_{jk}\}$
and $E:TM\to TM$ is the \emph{Lorentz force}, which is the bundle map uniquely determined by
\begin{equation*}
\Omega_z(v, w)=\langle E_z(v), w\rangle
\end{equation*}
for all $z\in M$ and $v, w\in T_zM$. A curve $\gamma:\R\to M$, satisfying \eqref{Mag} is called a \emph{magnetic geodesic}. The flow on $TM$ defined by $\phi_t: t\rightarrow (\g(t), \dot\g(t))$ is called a \emph{magnetic flow}. One can check that the generator ${\bf G}_{\mu}$ of the magnetic flow is 
$${\bf G}_{\mu}(z, v)={\bf G}(z,v)+E^j_i(z) v^i\frac{\p}{\p v^j},$$
where ${\bf G}(z, v)=v^i\frac{\p}{\p x^i}-\Gamma^i_{jk} v^j v^k\frac{\p}{\p v^i}$ is the generator of the geodesic flow. 
Note that time is not reversible on the magnetic geodesics, unless $\Omega=0$. When $\Omega=0$ we obtain the ordinary geodesic flow. We call the triple $(M, g, \Omega)$ a \emph{magnetic system}. 

From a dynamical system point of view, the magnetic flow is the Hamiltonian flow of $H(v)=\frac{1}{2}|v|_g^2, \, v\in TM$ w.r.t. the symplectic form $\beta=\beta_0+\pi^*\Omega$, where $\beta_0$ is the canonical symplectic form on $TM$ and $\pi: TM\rightarrow M$ is the canonical projection. 
Thus the magnetic flow preserves the level sets of the Hamiltonian function $H$, i.e. every magnetic geodesic has constant speed. Throughout the paper we fix the energy level $H^{-1}(\frac{1}{2})$, so we only consider the unit speed magnetic geodesics.

Given a magnetic geodesic $\g$ and a smooth function $f$ on $SM$, the unit sphere bundle of $M$, the {\it magnetic ray transform} of $f$ along $\g$ is defined by
$$If(\g)=\int f(\g(t),\dot{\g}(t))\,dt.$$
It is easy to check that the kernel of $I$ contains elements of the following form
$$\{f(z,v)={\bf G}_{\mu}\psi(z,v): \psi\in C^{\infty}(SM), \psi|_{\p SM}=0\}.$$
In applications one often considers ray transforms of $f$ which corresponds to symmetric tensor fields, i.e. $f(z,v)=f_{i_1 \cdots i_m}(z)v^{i_1}\cdots v^{i_m}$, denoted by $I_mf$ for nonnegative integers $m$. 
A basic inverse problem regarding the magnetic ray transform on a compact Riemannian manifold with boundary is to recover the tensor field $f$, up to natural obstructions, from $If(\gamma)$ along all magnetic geodesics $\gamma$ joining boundary points. 

If $\Omega=0$, this reduces to the usual geodesic ray transform of tensor fields, known as the {\it tensor tomography} problem. In this case, the natural elements in the kernel of $I_m$ are of the form $d^s\psi$ where $d^s$ is the symmetric differentiation and $\psi$ is a symmetric $(m-1)$-tensor vanishing on the boundary. These natural elements of the kernel are called {\it potential} tensor fields. So the question is whether the whole kernel consists of purely potential tensors, and when this is the case we say that $I_m$ is {\it s-injective} (when $m=0$, this just means injective). The problem is wide open on compact {\it non-trapping} manifolds with strictly convex boundary. Note that a compact manifold with boundary is non-trapping if every geodesic exits the manifold within a finite time. 

More progresses are made on manifolds under the stronger assumption of being {\it simple}. A compact manifold with boundary is simple if it is simply connected, free of conjugate points and $\p M$ is strictly convex. It is known that $I_0$ \cite{Mu77, MR78} and $I_1$ \cite{AR97} are s-injective on simple manifolds with sharp stability estimates \cite{SU04}. For $I_m$, $m\geq 2$ the tensor tomography problem on simple manifolds is still open in general, except the 2D case. $I_m$ is s-injective on simple surfaces for any $m\geq 2$ \cite{Sh07, PSU13}. In higher dimensions, $I_m$ is s-injective for generic simple metrics including the analytic ones \cite{SU05} and a sharp stability estimate holds \cite{St08}. The equivalence between the s-injectivity of $I_m$ and the surjectivity of its adjoint is known on simple manifolds \cite{PZ15}. See also the recent survey \cite{PSU14} and the references therein. For non-simple manifolds, there are studies under various assumptions \cite{PS88, Sh94, Sh97, Sh02, Da06} and possibly with conjugate points \cite{SU08, SU12, MSU15, HU15} or trapped geodesics \cite{Gu14}. Reconstruction formulas and numerical implementations of the geodesic ray transform on surfaces can be found, e.g., in \cite{PU04, Mo14, GM15}.

For the magnetic ray transform, potential tensors might not stay in the kernel of $I$ (except $I_0$ and $I_1$). Generally the natural elements in the kernel of the magnetic ray transform are combinations of tensors of different orders. For example, the magnetic ray transform of $d^s\beta-E(\beta)+d\varphi=\bold G_\mu (\beta+\varphi)$ always vanishes, where $\beta$ is a 1-form and $\varphi$ is a function on $M$, both vanishing on the boundary. In the current paper, we focus on the magnetic ray transform of tensor fields of orders up to 2. In particular, we are interested in the magnetic ray transform of tensor fields which are sums of 1-forms and symmetric 2-tensors. Note that for the geodesic ray transform, it is unnecessary to consider the combination of 1-forms and 2-tensors, since one can decouple the integral by the fact that geodesic flows are symmetric (or time reversible). 

The tensor tomography problem is closely related to another well-known geometric inverse problem, namely the boundary rigidity problem, which is concerned with the recovery of a Riemannian metric on a compact smooth manifold with boundary from the length data of distance minimizing geodesics connecting boundary points. In particular, the linearization of the boundary rigidity problem is the geodesic ray transform of symmetric 2-tensors. It has been proved that simple surfaces are boundary rigid \cite{PU05}, in higher dimensions generic simple manifolds are boundary rigid \cite{SU05} including the analytic ones. See also recent surveys \cite{Cr04, SU08a, UZ16} and the references therein. There is also a boundary rigidity problem on magnetic systems \cite{DPSU07}, whose linearization is exactly the magnetic ray transform of $h+\beta$, where $h$ is a symmetric 2-tensor and $\beta$ is a 1-form. This provides another motivation for considering such magnetic ray transforms.

A new approach to the tensor tomography problem on compact manifolds of dimension $\geq 3$ with strictly convex boundary has been developed recently in \cite{UV15, SUV14} under a global foliation assumption, based on corresponding local invertibility results. It was also applied to the boundary rigidity problem \cite{SUV16} through a pseudo-linearization argument. As a generalization, we study the local invertibility of the magnetic ray transform of tensor fields in the current paper. We ask the following question: can one recover $f$, up to natural obstructions, near a boundary point $p$ from its integrals $If$ along magnetic geodesics near $p$? By saying magnetic geodesics near $p$ we mean that all magnetic geodesic segments that are completely contained in some small neighborhood $O$ of $p$ with end points on $\p M$ close to $p$, which we call {\it $O$-local} magnetic geodesics, denoted by $\mathcal M_O$. Of course, such a set might be empty if there is no additional geometric assumption of the boundary.




In order to state our main theorems in concrete terms, we describe briefly the setting for our problem. Let $M$ be a compact Riemannian manifold with boundary. Let $z\in\p M$, we say $M$ is \emph{strictly magnetic convex (concave)} at $z$ if 
$$\Lambda(z,v)-\<E_z(v), \nu(z)\>_g > 0\, (< 0)$$ 
for all $v\in S_z(\p M)$, where $\Lambda$ is the second fundamental form of $\p M$, $\nu(z)$ is the inward unit vector normal to $\p M$ at $z$. We can extend $M$ to a larger open manifold $\widetilde M$ and denote the extended metric and magnetic field still by $g$ and $\Omega$. Obviously, magnetic geodesics $\g$ on $M$ can be uniquely extended to a magnetic geodesic on $\widetilde M$, and we still denote it by $\g$. Then intuitively the strict magnetic convexity at $z\in \p M$ means that any magnetic geodesic $\g$ which is tangent to the boundary $\p M$ at $z$ will stay away from $M$ except at $z$ locally.

Now let $\rho\in C^{\infty}(\tM)$ be a boundary defining function of $\p M$, so that $\rho\geq 0$ on $M$. Suppose $\p M$ is strictly magnetic convex at $p\in \p M$, then given a magnetic geodesic $\g$ on $\widetilde M$ with $\g(0)=p$, $\dot{\g}(0)\in S_p\p M$, one has 
\begin{equation}\label{-alpha}
\frac{d^2\rho}{dt^2}(\g(t))|_{t=0}=-\Lambda(p,\dot{\g}(0))+\<E_p(\dot{\g}(0)), \nu(p)\>_g<0.
\end{equation}
Similar to \cite{UV15} we consider the function $\tilde{x}(z)=-\rho(z)-\e |z-p|^2$, where $|\cdot|$ can be taken as the Euclidean norm locally, for some small enough $\e>0$, so that $\tilde x$ is strict magnetic concave from $U_c=\{\tilde{x}> -c\}\subset \widetilde M$ for some sufficiently small $c>0$. For the sake of simplicity, we drop the subscript $c$, i.e. $U_c=U$, and $O=U\cap M$ with compact closure. 

From now on, we assume that $M$ is of dimension $\geq 3$. We first consider a simpler case, namely $f=\beta+\varphi$, where $\beta$ is a 1-form and $\varphi$ is a function. In fact such magnetic ray transform is the linearization of the magnetic boundary rigidity problem in a fixed conformal class. Integrals of such tensors also appear in attenuated ray transforms \cite{HS10, SaU11, PSU12, PSUZ16}. For magnetic systems, a similar weighted magnetic ray transform was considered in \cite{Zh16} for studying the magnetic lens rigidity problem in a fixed conformal class. Here we consider purely the integral of $f$, without extra weights. The global case was considered in \cite{DPSU07} for simple systems. When $f$ is just a function on $M$, i.e. $\beta=0$, the local invertibility of $I$ was studied by the author for even a general family of smooth curves, see the Appendix of \cite{UV15}.

\begin{Theorem}\label{local 1-form}
Let $n=dim M\geq 3$. Assume that $\p M$ is strictly magnetic convex at $p\in\p M$. Given $f\in L^2(T^*\overline O)\times L^2(\overline O)$, there is $p\in H^{1}_{loc}(O)$ with $p|_{O\cap \p M}=0$ such that $f-dp\in L^2_{loc}(T^*O)\times L^2_{loc}(O)$ can be determined from $If$ restricted to $O$-local magnetic geodesics. Moreover, the following stability estimate for $s\geq 0$
$$\|f-dp\|_{H^{s-1}(K)}\leq C\|If\|_{H^s(\mathcal M_O)}$$
holds on any compact subset $K$ of $O$, assuming $f$ is in $H^s$ instead of $L^2$.
\end{Theorem}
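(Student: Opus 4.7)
The plan is to adapt the Uhlmann--Vasy scattering calculus approach \cite{UV15} to the coupled 1-form plus function setting. View $f=\beta+\varphi$ as a section of $T^*M\oplus\R$. The strict magnetic concavity of $\tx$ on $U$ means that each $O$-local magnetic geodesic meets $U$ in a short arc nearly tangent to the level sets of $\tx$, and these arcs are smoothly parametrized by a base point on the artificial boundary $\{\tx=-c\}$ together with a nearly-tangential initial direction. This places the problem in the scattering calculus on $U$ with scattering face $\{\tx=0\}$, exactly as in the scalar magnetic case treated in the appendix of \cite{UV15}.

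Next, form the localized weighted normal operator
$$
A_F=e^{-F/\tx}\,L^{*}\chi^{2}L\,e^{F/\tx}
$$
on sections of $T^*M\oplus\R$, where $L$ is $I$ restricted to the above parametrized family, $\chi$ is a smooth cutoff in the angle between $\dg$ and the level sets of $\tx$, and $F>0$ is large. Since the underlying stationary-phase reduction uses only the strict concavity of $\tx$ along magnetic geodesics, the mapping properties established in \cite{UV15} carry over: $A_F$ is a scattering pseudodifferential operator of order $-1$ on $U$ with values in $\operatorname{End}(T^*M\oplus\R)$. Its principal symbol at the scattering face is the quadratic form on $T^*M\oplus\R$ obtained by integrating $(\alpha(v)+\psi)^2$ against a Gaussian in the scattering covector over tangential unit vectors $v$. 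The kernel of this symbol is exactly the gauge direction $(d\psi,0)$ evaluated at the frequency, and the form is positive definite on the orthogonal complement, giving ellipticity modulo gauge.

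Finally, decompose $\beta=\beta^s+dp$ with $p|_{O\cap\p M}=0$ and a weighted solenoidal condition on $\beta^s$ adapted to the scattering calculus, exactly as in \cite{UV15}. Adding a gauge-fixing term to $A_F$ yields a genuinely elliptic scattering pseudodifferential operator on the solenoidal plus function sector. A parametrix construction, combined with the smallness of the remainder for large $F$ in the Uhlmann--Vasy exponentially weighted Sobolev spaces, inverts this operator on compact subsets $K\subset O$, recovers $(\beta^s,\varphi)=f-(dp,0)$ from $If|_{\mathcal M_O}$, and gives the stability estimate
$$
\|f-dp\|_{H^{s-1}(K)}\le C\|If\|_{H^{s}(\mathcal M_O)}.
$$
The main obstacle I anticipate is the principal-symbol computation in the middle step: because the magnetic flow is not time-reversible, one cannot use the parity argument that in the geodesic case decouples 1-forms and functions, so the symbol is a genuinely coupled $(n+1)\times(n+1)$ matrix and its nondegeneracy exactly modulo $(dp,0)$ must be verified by direct linear algebra rather than a rank-by-rank reduction.
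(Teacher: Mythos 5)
Your overall plan is the paper's approach: build an exponentially conjugated, angularly localized normal operator for the coupled $(\text{1-form},\text{function})$ data, show it is an elliptic scattering $\Psi$DO modulo a solenoidal gauge, and invert it for large $F$. However, there is a concrete technical gap and one misdirected worry.

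The gap: you write $A_F = e^{-F/\tx}L^*\chi^2 L\,e^{F/\tx}$ acting on $T^*M\oplus\R$, but this is not yet a matrix scattering $\Psi$DO with a uniform order. The averaging operators that pair the ray data against functions ($J_0$, with an $x^{-2}$ weight) and against scattering one-forms ($J_1$, with a $g_{sc}$ weight) have scattering orders that match on the diagonal blocks but are off by one power of $x$ on the off-diagonal blocks. The paper fixes this by conjugating with $W=\mathrm{diag}(1,x^{-1})$, i.e.\ defining $A_F = W^{-1}e^{-F/x}\begin{pmatrix}J_1\\J_0\end{pmatrix}Ie^{F/x}W$ (and matching this in the gauge $\boldsymbol{\delta}_F = e^{F/x}\boldsymbol{\delta}W^{-1}e^{-F/x}$), which balances all four blocks so that $A_F\in\Psi^{-1,0}_{sc}$ with a nondegenerate front-face symbol down to $x=0$. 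Without some such normalization the off-diagonal blocks either degenerate or blow up at the artificial boundary and ellipticity modulo gauge fails exactly where you need it. This is not cosmetic; your parametrix/large-$F$ inversion argument requires a genuine elliptic scattering $\Psi$DO.

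The misdirected worry: you anticipate that the magnetic flow's time-irreversibility makes a parity argument unavailable, forcing "direct linear algebra" on a fully coupled $(n+1)\times(n+1)$ symbol. In fact, the coupling you correctly identify lives at the level of the transform (one cannot reverse time to decouple $\beta$ from $\varphi$), but once one forms the localized normal operator with an \emph{even} cutoff $\chi$, a parity argument on the $\hat{Y}$-sphere reappears and neatly splits the kernel condition into $\tilde S\beta^0+\hat Y\cdot\beta'=0$ and $\varphi=0$ (both at fiber infinity and at finite points, where $\theta_1=-\frac{\xi-iF}{\xi^2+F^2}(\hat Y\cdot\eta)$ is odd in $\hat Y$). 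The irreversibility of the flow, encoded in the odd part of $\alpha(\hat Y)$ coming from the Lorentz force, only obstructs this parity argument in the 2-tensor case (Theorem~\ref{local 2-tensor}), where the paper indeed needs the more delicate coupled argument you describe. For Theorem~\ref{local 1-form} the parity route goes through and the ellipticity computation is genuinely close to the geodesic one.
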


Next we consider the local magnetic ray transform of $f=h+\beta$ with $h$ a symmetric 2-tensor and $\beta$ a 1-form. As mentioned above, such ray transforms might find its applications in the boundary rigidity problem for magnetic systems. The global case was considered in \cite{DPSU07} for simple magnetic systems satisfying some curvature assumption or real analytic magnetic systems, later on simple 2D magnetic systems \cite{Ai12}.

\begin{Theorem}\label{local 2-tensor}
Let $n=dim M\geq 3$. Assume that $\p M$ is strictly magnetic convex at $p\in\p M$. Given $f\in L^2(Sym^2T^*\overline O)\times L^2(T^*\overline O)$, there exist $u\in H^{1}_{loc}(T^* O)$ and $p\in H^{1}_{loc}(O)$ with $u|_{O\cap \p M}=0$, $p|_{O\cap \p M}=0$ such that $f-(d^su-E(u)+dp)\in L^2_{loc}(Sym^2T^*O)\times L^2_{loc}(T^*O)$ can be determined from $If$ restricted to $O$-local magnetic geodesics. Moreover, the following stability estimate for $s\geq 0$
$$\|f-(d^su-E(u)+dp)\|_{H^{s-1}(K)}\leq C\|If\|_{H^s(\mathcal M_O)}$$
holds on any compact subset $K$ of $O$, assuming $f$ is in $H^s$ instead of $L^2$.
\end{Theorem}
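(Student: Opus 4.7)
The plan is to adapt the scheme of Uhlmann--Vasy \cite{UV15} and Stefanov--Uhlmann--Vasy \cite{SUV14} to magnetic flows and to the combined $1$-form / symmetric $2$-tensor content of $f$. Write $f=(h,\beta)$ and observe that the natural kernel elements are ${\bf G}_\mu(u+p)=(d^su-E(u),\,dp)$. The first task is a gauge-fixing step: produce $(u,p)\in H^1_{loc}(O)$, vanishing on $O\cap\p M$, such that the modified field $f-(d^su-E(u)+dp)$ is ``magnetic-solenoidal'', i.e.\ satisfies suitable divergence-type conditions on the two components. This amounts to a coupled second-order Dirichlet boundary value problem for $(u,p)$ whose principal part is (block) Laplace-type, the lower order magnetic term $E$ being a subprincipal perturbation, and it is uniquely solvable in $H^1_{loc}(O)$ by standard elliptic theory. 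After subtracting ${\bf G}_\mu(u+p)$, we may assume $f$ is already solenoidal.

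Following \cite{UV15,SUV14}, treat $U=\{\tx>-c\}$ as a manifold with artificial boundary $\{\tx=0\}$ in the sense of Melrose's scattering calculus. Compose $I$ with an appropriately weighted, angle-cutoff adjoint $L_F$ using the exponential conjugation $e^{-F/\tx}$, yielding a normal operator $N_F:=L_F\chi I$ acting on $H^s(Sym^2T^*U)\oplus H^s(T^*U)$. The integration is over $O$-local magnetic geodesics nearly tangent to the level sets of $\tx$; by the general machinery this $N_F$ is a scattering pseudodifferential operator on $U$, and a parametrix for it (if elliptic on the solenoidal subspace) will provide both inversion and the stated $H^{s-1}$ stability estimate on compact $K\subset O$.

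The heart of the proof is the computation of the scattering principal symbol of $N_F$ at a boundary point $q$ and covector $\xi\in{}^{sc}T^*_q U$. Since $E$ enters only as a lower order term in ${\bf G}_\mu$, the symbol coincides with the one produced by the geodesic flow of $g$ acting on the combined bundle: a Gaussian integral over unit vectors $v\in{}^{sc}T_qU$ tangent to $\{\tx=\text{const}\}$ of the squared contraction $(v^iv^jh_{ij}+v^i\beta_i)$ against test tensors, weighted by a factor of the form $e^{-(v\cdot\xi)^2/(2F)}$. One must verify that, restricted to the subspace cut out by the solenoidal conditions (quotienting out the gauge directions $\{\xi\odot\eta\}$ in the symmetric part and $\{\xi\}$ in the $1$-form part), this symbol is positive definite for $F$ sufficiently large, hence elliptic in the scattering sense. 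The standard parametrix construction then yields recovery of the solenoidal $f$ modulo a smoothing error, which can be absorbed on a sufficiently small $U$, giving the claimed local invertibility and stability.

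The main obstacle I anticipate is precisely Step 3: the ellipticity of the scattering principal symbol on the combined solenoidal subspace. This is a finite-dimensional algebraic check that must simultaneously accommodate the $1$-form and symmetric $2$-tensor components, whose gauge directions interact through the joint integrand against $(v,vv^T)$. The subtle point is to show that the two solenoidal conditions together have codimension exactly matching the rank of the symbol's degeneracy; once this is done, the remaining elements of the argument—gauge fixing, localization, conjugation, parametrix—parallel those of \cite{SUV14} for pure $2$-tensors, since the Lorentz force $E$ does not affect any principal symbol computation.
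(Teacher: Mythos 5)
Your overall architecture is right, but the specific claim on which the proof rests --- that ``the Lorentz force $E$ does not affect any principal symbol computation'' and that the scattering symbol ``coincides with the one produced by the geodesic flow'' --- is false, and this is exactly where the paper's argument has to do genuinely new work. In the scattering calculus the Lorentz force does survive to the scattering principal symbol in two ways. First, when one rewrites $-xE$ in the scattering bases $(\frac{dx}{x^2},\frac{dy}{x})$, $(x^2\partial_x,x\partial_y)$, the entry $-E^x_y$ is \emph{not} accompanied by a vanishing power of $x$, so the symbol of $\boldsymbol{d}^s_F$ carries an extra term $b$ coming from $E$ (Lemma \ref{symbol 2}). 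Second, and more importantly for ellipticity, the function $\alpha(x,y,\lambda,\omega)=\frac{d^2}{dt^2}x'(0)$ along magnetic geodesics is no longer an even (quadratic) function of $\hat Y$: it decomposes as $\alpha=\alpha^++\alpha^-$ with $\alpha^+$ a positive definite quadratic form and $\alpha^-$ a nontrivial $1$-form produced by $E$. The factor $e^{i\alpha_\pm(\xi+iF)|Y|^2}$ (and the term $\theta_2=\theta_1^2+2i\alpha\frac{\xi-iF}{\xi^2+F^2}$) then places $\alpha^-$ squarely in the scattering symbol of $B_F$, where it couples the $2$-tensor and $1$-form blocks through the cross-terms in $|\theta_2 h_{xx}+2\theta_1\hat Y\cdot h_{xy}+(\hat Y\otimes\hat Y)\cdot h_{yy}+\theta_1\beta_x+\hat Y\cdot\beta_y|^2$, since $\theta_1$ and $\theta_2(\hat Y)-\theta_2(-\hat Y)$ are both odd in $\hat Y$.

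Because of this coupling the finite-point ellipticity argument cannot be reduced to the two separate conditions you describe; you cannot simply quotient out the two gauge directions and check a codimension count. The paper's Lemma \ref{B_F at finite} instead needs an $F\to\infty$ compactness/contradiction argument on normalized pairs $[h^k,\beta^k]$, which drives the $\alpha^-$- and $b$-coupled terms to zero and only then decouples $h_{yy}$ from $\beta_y$; this is why the statement reads ``there exists $F_0$ such that for $F>F_0$'' (unlike the $1$-form case, Lemma \ref{A_F at finite}, where any $F>0$ works). Without this step your Step~3 would not close. You also omit the extra conjugation by $W=\mathrm{diag}(1,x^{-1})$ in \eqref{define A_F}--\eqref{define B_F}, which is needed to put the two blocks of the operator on the same scattering order so that the combined operator is an honest element of $\Psi^{-1,0}_{sc}$; the exponential conjugation alone does not achieve this.
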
 

Theorem \ref{local 1-form} and \ref{local 2-tensor} generalize the Helgason's type of support theorems for the tensor tomography problem of the geodesic flow in the real-analytic category \cite{K09, KS09} and the smooth category \cite{UV15, SUV14} to the magnetic case. Reconstruction formulas can also be derived in the spirit of \cite[Theorem 4.15]{SUV14}.

As an immediate consequence and application of our local invertibility theorems, we consider the global s-injectivity of the magnetic ray transform on tensors. Given a compact Riemannian manifold $(M,g)$ with smooth boundary and a magnetic field $\Omega$, we say that $M$ can be {\em foliated by strictly magnetic convex hypersurfaces} w.r.t. the magnetic system $(M,g,\Omega)$ if there exist a smooth function $\tau:M\to \mathbb R$ and $a<b$, such that $M\subset \{\tau\leq b\}$, the level set $\tau^{-1}(t)$ is strictly magnetic convex from $\{\tau\leq t\}$ for any $t\in (a,b]$, $d\tau$ is non-zero on these level sets and $\{\tau\leq a\}$ has empty interior. Note that $\p M$ is not necessarily a level set of $\tau$.

\begin{Theorem}\label{global result}
Let $M$ be compact with smooth boundary and $dim M\geq 3$, $\p M$ is strictly magnetic convex. Assume that $M$ can be foliated by strictly magnetic convex hypersurfaces and the set $\{\tau\leq a\}$ is non-trapping.\\
a) Given $f\in T^*M\times C^{\infty}(M)$, if $If\equiv 0$, there exists $p\in C^{\infty}(M)$ with $p|_{\p M}=0$ such that $f=dp$.\\
b) Given $f\in Sym^2T^*M\times T^*M$, if $If\equiv 0$, there exist $u\in T^*M$ and $p\in C^{\infty}(M)$ with $u|_{\p M}=0$, $p|_{\p M}=0$, such that $f=d^su-E(u)+dp$.
\end{Theorem}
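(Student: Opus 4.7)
The plan is to derive Theorem~\ref{global result} from the local invertibility results, Theorems~\ref{local 1-form} and~\ref{local 2-tensor}, by a layer-stripping induction along the foliation $\{\tau = t\}$ combined with a preliminary global gauge fixing. I describe case b); case a) is entirely analogous with $u$ and $E(u)$ dropped.

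Set $\mathcal{D}(u, p) := d^s u - E(u) + dp$; then $I\mathcal{D}(u,p) \equiv 0$ whenever $u, p$ vanish on $\p M$. The first step is a global gauge choice: the principal symbol of $\mathcal{D}$ is pointwise injective, so $\mathcal{D}^{*}\mathcal{D}$ is an elliptic second-order operator. Solve the Dirichlet problem $\mathcal{D}^{*}\mathcal{D}(u_0, p_0) = \mathcal{D}^{*} f$ with $u_0|_{\p M} = 0$, $p_0|_{\p M} = 0$ to obtain smooth potentials $(u_0, p_0)$, and set $\tilde f := f - \mathcal{D}(u_0, p_0)$. Then $\tilde f$ is smooth, satisfies the global solenoidal gauge $\mathcal{D}^{*} \tilde f = 0$, and $I\tilde f \equiv 0$. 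Proving $\tilde f \equiv 0$ produces the decomposition $f = \mathcal{D}(u_0, p_0)$.

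The main step is the layer-stripping. Define
\[
T := \inf\{t \in [a, b] : \tilde f \equiv 0 \text{ on } M \cap \{\tau > t\}\}.
\]
Strict magnetic convexity of $\p M$ combined with Theorem~\ref{local 2-tensor} applied near each $p \in \p M$ gives $\tilde f \equiv 0$ on a collar of $\p M$, so $T < b$. Suppose for contradiction that $T > a$. For each $q \in \tau^{-1}(T) \cap M$, the level set $\tau^{-1}(T)$ is strictly magnetic convex from $\{\tau \le T\}$, so Theorem~\ref{local 2-tensor} in a small neighborhood $O_q \subset \{\tau \le T + \varepsilon\}$ produces local potentials $(u_q, p_q)$ with $\tilde f = \mathcal{D}(u_q, p_q)$ on $O_q$. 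The inductive hypothesis gives $\mathcal{D}(u_q, p_q) = 0$ on $O_q \cap \{\tau > T\}$, and together with the global gauge condition this yields the elliptic equation $\mathcal{D}^{*}\mathcal{D}(u_q, p_q) = 0$ on $O_q$ with $\mathcal{D}(u_q, p_q)$ vanishing on an open portion. A unique-continuation argument then forces $\tilde f \equiv 0$ on $O_q$. A compactness argument covering $\tau^{-1}(T) \cap M$ by finitely many such $O_q$ contradicts the choice of $T$; hence $T = a$.

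Since $\{\tau \le a\}$ has empty interior and $\tilde f$ is continuous, $\tilde f \equiv 0$ on the dense subset $M \cap \{\tau > a\}$ propagates to $M$, so $f = \mathcal{D}(u_0, p_0)$ with $(u_0, p_0)$ smooth by elliptic regularity. The principal technical obstacle is the unique-continuation step: one must verify that the local potentials of $\tilde f$, subject to the global gauge and to vanishing of $\mathcal{D}(u_q,p_q)$ on one side of a strictly magnetic-convex hypersurface, force $\tilde f$ to vanish across the interface. This reduces to analyzing the kernel of $\mathcal{D}$ (finite-dimensional contributions from Killing-type solutions of $d^{s}u - E(u) = 0$, $dp = E(u)$) in combination with the elliptic propagation of the gauge. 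The non-trapping hypothesis on $\{\tau \le a\}$ is invoked to control this argument as $t \to a^{+}$, when the foliation degenerates and no dynamical obstruction must remain hidden in the limit region.
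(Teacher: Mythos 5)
Your overall strategy (global gauge fixing followed by layer stripping) is genuinely different from the paper's, and as stated it has a gap that the paper's approach is specifically designed to avoid. The paper does \emph{not} fix a global solenoidal gauge. Instead, it proves (Lemma 5.1) that on each superlevel set $M_t = M\setminus\{\tau\le t\}$ there is a potential $v_t$ with $v_t|_{\Omega_t}=0$ and $f=\mathbf d v_t$, and it pushes the infimum of admissible $t$ down to $a$ by gluing local potentials. The glue is that two local potentials $v_p,v_q$ for $f$ on overlapping charts must agree: $\mathbf d(v_p-v_q)=0$ with vanishing Dirichlet data on the relevant boundary portion forces $v_p=v_q$. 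Nothing in the argument ever asserts that $f$ or any modification of $f$ is literally zero in a neighborhood; what vanishes is the \emph{difference of potentials}, and that is what propagates.

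Your version instead sets $\tilde f := f-\mathcal D(u_0,p_0)$ with $\mathcal D^{*}\tilde f=0$ and tries to show $\tilde f\equiv 0$ by stripping. The problem is that Theorem~\ref{local 2-tensor} only gives a \emph{local} gauge equivalence $\tilde f=\mathcal D(u_q,p_q)$ on $O_q$; it does not say $\tilde f=0$. Your opening claim that Theorem~\ref{local 2-tensor} applied near $\partial M$ gives $\tilde f\equiv 0$ on a collar is therefore unjustified — the local potentials $(u_q,p_q)$ have no reason to vanish, they satisfy the local Witten-conjugated gauge of Section 2, not your global $\mathcal D^{*}$ gauge, and they only come with Dirichlet data on $O_q\cap\partial M$, not on the artificial part of $\partial O_q$. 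Consequently the step where you combine $\mathcal D^{*}\mathcal D(u_q,p_q)=0$ on $O_q$ with $\mathcal D(u_q,p_q)=0$ on $O_q\cap\{\tau>T\}$ to conclude vanishing across the interface is not a standard unique continuation statement: vanishing of $\mathcal D(u_q,p_q)$ on one side does not give Cauchy data for $(u_q,p_q)$ there, and $\mathcal D^{*}\mathcal D$ unique continuation (even if applicable) would concern $(u_q,p_q)$, not $\mathcal D(u_q,p_q)$. You acknowledge this is the ``principal technical obstacle,'' but the obstacle is real and your sketch does not supply the missing argument.

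There is a second, smaller mismatch. The paper's Lemma 5.1 only produces a potential on $M_a$, so the passage to all of $M$ is not by continuity alone: the paper introduces a global solution $u$ of the transport equation $\mathbf G_\mu u=-f$ on $SM$ (this is where the non-trapping of $\{\tau\le a\}$ is used), identifies $u$ with $-v$ on $M_a$, and then uses a spherical-harmonics argument on $SM$ plus the empty-interior hypothesis to show $u$ descends to a smooth function/one-form on all of $M$. Your argument would bypass this because $(u_0,p_0)$ is globally defined from the outset — which is indeed one potential advantage of a global-gauge-first approach if the stripping step could be repaired — but as written the proof does not close. If you want to keep the global-gauge structure, you would need to argue directly, as the paper does, that the \emph{potentials} (not the solenoidal tensor) extend from layer to layer, which amounts to reproducing Lemma 5.1 and discarding the $\mathcal D^{*}\mathcal D$ projection.
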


The proof of the global result is based on a layer stripping argument similar to that in \cite{UV15, SUV14, PSUZ16}, combined with a regularity result of the solutions of some transport equation w.r.t. the magnetic flow on the unit sphere bundle. A global stability estimate can be derived in a similar way.

In the case of absence of magnetic fields, the foliation condition is an analogue of the Herglotz \cite{He1905} and Wieckert \& Zoeppritz \cite{WZ1907} condition $\frac{\p}{\p r}\frac{r}{c(r)}>0$ for radial symmetric metric $c(r)e$ on a disk with $e$ the Euclidean metric, see also \cite[Section 6]{SUV16}. Examples of manifolds satisfying the foliation conditions are compact submanifolds of complete manifolds with positive curvature \cite{GW76}, compact manifolds with non-negative sectional curvature \cite{Es86}, and compact manifolds with no focal points \cite{RS02}. Our foliation condition defined above is the corresponding version for magnetic systems. It implies the absence of trapped magnetic geodesics in $\{\tau>a\}$, but allows the existence of conjugate points (w.r.t. the magnetic geodesics).   

As mentioned above, the main difference of the magnetic tensor tomography problem comparing with the geodesic case is the coupling of tensors of different orders. Similar to \cite{UV15, SUV14}, we introduce some localized version of $I^*I$ near $p\in \p M$ to fit into Melrose's scattering calculus \cite{Mel94}. However, in addition to the exponential conjugacy that appeared in the geodesic papers, we add an extra pair of conjugacy to address the issue from the coupling of tensors of different orders, see Section 2 and 3 for details.  Another technical difficulty comes up during the decoupling of the effects from tensors of different orders when studying the ellipticity of the localized operator near the artificial boundary $\tilde x=-c$. In particular, the nature of the magnetic flow appears in the symmetric 2-tensor case (Section 3.2), our algebraic argument for the ellipticity of the localized operator is different from \cite{SUV14} and it has potential applications to the boundary rigidity problem for magnetic systems and the invertibility of ray transforms along more general curves.

The paper is organized as follows: In Section 2, we define the localized operators and the proper gauges. Section 3 is devoted to the proof of the ellipticity of the localized operator, up to the gauges, which addresses the key technical issue of the paper. The proofs of Theorem \ref{local 1-form} and \ref{local 2-tensor} are given in Section 4. Finally, we give the proof of Theorem \ref{global result} in Section 5.

\medskip

\noindent{\bf Acknowledgements:} The author wants to thank Prof. Gunther Uhlmann for suggesting this problem and reading an earlier version of the paper. Thanks are also due to Prof. Ting Zhou, part of the work was carried out during the author's visit to her at Northeastern University in 2015. This work was supported by EPSRC grant EP/M023842/1.




\section{The localized operators}

For fixed small $c>0$, let $x=\tilde x+c$, thus $U=\{x>0\}$ with the artificial boundary $x=0$. As what has been done in \cite{UV15}, one can complete $x$ to a coordinate system $(x,y)$ on a neighborhood of $p$, such that locally the metric is of the form $g=dx^2+h(x,y)$ where $h(x,y)$ is a metric on the level sets of $x$. For each point $(x,y)$ we can parameterize magnetic geodesics through this point which are `almost tangent' to level sets of $x$ (these are the curves that we are interested in) by $\lambda\p_x+\omega\p_y\in T\widetilde M$, $\omega\in\mathbb S^{n-2}$ and $\lambda$ is relatively small. Given a magnetic geodesic $\gamma_{x,y,\lambda,\omega}(t)=(x'(t),y'(t))$ with $\gamma_{x,y,\lambda,\omega}(0)=(x,y)$, we define $\alpha(x,y,\lambda,\omega)=\frac{d^2}{dt^2}x'(0)$. In particular, $\alpha(x,y,0,\omega)>0$ for $x$ small by the concavity of $x$. Furthermore, it was shown in \cite{UV15} that there exist $\delta_0>0$ small and $C>0$ such that for $|\l|\leq C\sqrt{x}$ (and $|\l|<\delta_0$), $x'(t)\geq 0$ for $t\in (-\delta_0,\delta_0)$, the magnetic geodesics remain in $\{x\geq 0\}$ at least for $|t|<\delta_0$, i.e. they are $O$-local magnetic geodesics for sufficiently small $c$. Note that \cite{UV15} considers ordinary geodesics, but the settings work for general curves, see the Appendix of \cite{UV15}. 

Our inverse problem is now that assuming $$(If)(\xy)=\int_{\R} f(\gxy(t),\dot{\gamma}_{x,y,\lambda,\omega}(t))\, dt=\int_{\mathbb R}f(x'(t),y'(t),\lambda'(t),\omega'(t))\,dt$$ ($f=\beta+\varphi$ or $h+\beta$) is known for all $\gamma_{x,y,\lambda,\omega}\in \mathcal{M}_O$, we would like to recover $f$ from $If$ up to some gauge. Originally $f$ is defined on $M$, we can extend $f$ by zero to $\widetilde M$ so that the integral is actually defined on a finite interval.

Following the approach of \cite{UV15, SUV14}, let $\chi$ be a smooth non-negative even function on $\mathbb R$ with compact support. Given a function $v$ defined on $\mathcal M_{O}$, or more specifically $\{(x,y,\lambda,\omega): \lambda/x\in \mbox{supp}\,\chi\}$, we define
$$J_0v(x,y)=x^{-2}\int v(x,y,\lambda,\omega)\chi(\lambda/x)\, d\lambda d\omega;$$
$$J_1v(x,y)=\int v(x,y,\lambda,\omega)g_{sc}(\lambda\p_x+\omega\p_y)\chi(\lambda/x)\, d\lambda d\omega;$$
$$J_2v(x,y)=x^{2}\int v(x,y,\lambda,\omega)g_{sc}(\lambda\p_x+\omega\p_y)\otimes g_{sc}(\lambda\p_x+\omega\p_y)\chi(\lambda/x)\, d\lambda d\omega$$
where $g_{sc}$ is a {\it scattering metric}, locally it can be written as $g_{sc}=x^{-4}dx^2+x^{-2}h(x,y)$, here $h$ is the metric on the level sets of $x$. Note that the images of $J_i,\, i=0,1,2$ are functions, one-forms and symmetric $2$-tensors on $U=\{x> 0\}$ respectively.

We denote $W:=\begin{pmatrix}1 & 0\\0 & x^{-1}\end{pmatrix}$, define 
\begin{equation}\label{define A_F}
A_F[\beta, \varphi]=W^{-1}e^{-F/x}\begin{pmatrix} J_1 \\ J_0 \end{pmatrix} I e^{F/x}W \begin{pmatrix} \beta \\ \varphi \end{pmatrix};
\end{equation}
\begin{equation}\label{define B_F}
B_F[h, \beta]=W^{-1}e^{-F/x}\begin{pmatrix} J_2 \\ J_1 \end{pmatrix} I e^{F/x}W \begin{pmatrix} h \\ \beta \end{pmatrix}.
\end{equation}
Comparing with the operators in \cite{UV15, SUV14}, we introduce an additional conjugacy $W^{-1}\,\cdot\,W$ in \eqref{define A_F} and \eqref{define B_F}. The extra conjugacy helps to unify the microlocal properties of the components of $A_F$ and $B_F$ (as matrix operators) respectively, see Section 3. Such idea also appeared in \cite{PSUZ16, Zh16} for weighted X-ray transforms. Obviously $A_F\in \mbox{hom}(T^*_{sc}U\times \underline U ,\,T^*_{sc}U\times \underline U)$ and $B_F\in \mbox{hom}(Sym^2T^*_{sc}U\times T^*_{sc}U,\,Sym^2T^*_{sc}U\times T^*_{sc}U)$, where $\underline U$ is the trivial bundle. The local basis for the scattering cotangent bundle $T^*_{sc}U$ is $\frac{dx}{x^2},\,\frac{dy_1}{x}\cdots \frac{dy_{n-1}}{x}$, and its dual $T_{sc}U$, the scattering tangent bundle, has the local basis $x^2\p x,\,x\p y_1\cdots x\p y_{n-1}$. We will show in the next section that $A_F,\, B_F$ are elliptic scattering pseudodifferential opeartors (see \cite{Mel94, UV15, SUV14} for more details) if one enforces some proper gauge conditions.

For the rest of this section, we study the gauge condition that suits the local magnetic ray transform. Let $\delta$ be the divergence on one-forms, which is the adjoint of $d$ relative to the scattering metric $g_{sc}$. Given a function $\varphi$ and a one-form $\beta$, define ${\bf d}\varphi=\begin{pmatrix} d \\ 0\end{pmatrix}\varphi=[d\varphi, 0]$ and $\boldsymbol{\delta}[\beta,\varphi]=\begin{pmatrix}\delta & 0\end{pmatrix}\begin{pmatrix}\beta \\ \varphi\end{pmatrix}=\delta\beta$, we introduce the conjugated operators
$${\bf d}_F=e^{-F/x}W^{-1}{\bf d}e^{F/x},$$
and $\boldsymbol{\delta}_F=e^{F/x}\boldsymbol{\delta}W^{-1}e^{-F/x}$ its adjoint with respect to the scattering metric $g_{sc}$.

\begin{Lemma}\label{symbol 1}
The principal symbol of $\boldsymbol{d}_F\in \mbox{\normalfont{Diff}}^{1,0}_{sc}(\underline U,\,T^*_{sc}U\times \{0\})$ is $\begin{pmatrix} \xi+iF&\eta\otimes&0 \end{pmatrix}^T$;
while the principal symbol of $\boldsymbol{\delta}_F\in \mbox{\normalfont{Diff}}^{1,0}_{sc}(T^*_{sc}U\times \underline U,\, \underline U)$ is $\begin{pmatrix} \xi-iF & \iota_{\eta} & 0 \end{pmatrix}.$
\end{Lemma}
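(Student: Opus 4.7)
The plan is to verify both formulas by direct computation, treating $F$ as a real constant (the regime in which the conjugacy $e^{\pm F/x}$ is used).

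First I would unwind $\boldsymbol{d}_F$ acting on a scalar $\varphi$. Because $\boldsymbol{d}\varphi=[d\varphi,0]$, the second slot is zero and $W^{-1}$ acts trivially on it, so
\[
\boldsymbol{d}_F\varphi = \bigl[\,e^{-F/x}\,d(e^{F/x}\varphi),\,0\,\bigr].
\]
Using $d(e^{F/x}\varphi)=e^{F/x}\bigl(d\varphi-(F/x^{2})\varphi\,dx\bigr)$ and then rewriting in the scattering basis $\{\tfrac{dx}{x^{2}},\tfrac{dy_{j}}{x}\}$, the $1$-form slot becomes
\[
(x^{2}\partial_{x}-F)\varphi\,\tfrac{dx}{x^{2}} \;+\; \sum_{j}(x\partial_{y_{j}})\varphi\,\tfrac{dy_{j}}{x},
\]
while the function slot is still $0$. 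Each entry is a scattering differential operator of order $(1,0)$, which already shows $\boldsymbol{d}_{F}\in\mathrm{Diff}^{1,0}_{sc}(\underline{U},T^{*}_{sc}U\times\{0\})$.

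Second, I would read off the scattering principal symbol at the boundary $x=0$, where one tracks the full polynomial of degree $\le 1$ in $(\xi,\eta)$ (with the normalization under which $-ix^{2}\partial_{x}$ has symbol $\xi$, $-ix\partial_{y_{j}}$ has symbol $\eta_{j}$, and the zeroth-order term $-F$ is seen as $iF$ after the overall $i^{-1}$ factor). The $\tfrac{dx}{x^{2}}$-entry then gives $\xi+iF$, the $\tfrac{dy_{j}}{x}$-entries assemble into the column $\eta\otimes\,\cdot$, and the function entry is $0$, which is exactly the first claim.

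For $\boldsymbol{\delta}_{F}$, the quickest route is to note that $\boldsymbol{\delta}_{F}=(\boldsymbol{d}_{F})^{*}$ with respect to the scattering $L^{2}$ inner products: the factors $e^{\pm F/x}$ and $W^{-1}$ are real diagonal multiplications and hence self-adjoint, while $\delta=d^{*}$ by definition. The principal symbol of the adjoint is the fibrewise adjoint of the principal symbol, so taking the conjugate transpose of $(\xi+iF,\ \eta\otimes\,,\ 0)^{T}$ yields $(\xi-iF,\ \iota_{\eta},\ 0)$, with $\iota_{\eta}$ the interior product with $\eta$ on the tangential part of a scattering $1$-form. (The same answer can be produced by expanding $\boldsymbol{\delta}_{F}[\beta,\psi]=e^{F/x}\delta(e^{-F/x}\beta)$ directly in the scattering basis; the $\psi$-slot is discarded by $\boldsymbol{\delta}$.) The computation is essentially routine — the only point that requires care is this boundary convention, and it is precisely the mechanism by which the exponential conjugacy $e^{\pm F/x}$ plants the constant $\pm iF$ into the scattering principal symbol.
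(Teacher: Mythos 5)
Your proof is correct and follows essentially the same route as the paper: compute the conjugated exterior derivative, read off its scattering principal symbol, and obtain the symbol of $\boldsymbol{\delta}_F$ by passing to the adjoint. The only difference is that you carry out the short computation of $\sigma_{sc}(e^{-F/x}de^{F/x})$ directly in the scattering basis, whereas the paper simply cites \cite[Lemma 3.2]{SUV14} for that formula.
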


\begin{proof}
Note that by definition ${\bf d}_F=[e^{-F/x}de^{F/x},0]$, and by \cite[Lemma 3.2]{SUV14} the principal symbol of $e^{-F/x}de^{F/x}$ is 
\begin{equation*}
\begin{pmatrix} \xi+iF \\ \eta\otimes\end{pmatrix}.
\end{equation*}
Thus the principal symbol of ${\bf d}_F$ is 
\begin{equation*}
\begin{pmatrix}
\xi+iF \\
\eta\otimes \\
0
\end{pmatrix}.
\end{equation*}
Since $\boldsymbol{\delta}_F$ is the adjoint of ${\bf d}_F$, its symbol is given by the adjoint of that of the latter with respect to $g_{sc}$, i.e.
\begin{equation*}
\begin{pmatrix}
\xi-iF & \iota_{\eta} & 0
\end{pmatrix}.
\end{equation*}
where $\iota_{\eta}$ is the contraction with $\eta$.
\end{proof}


Now let $d^s$ be the symmetric differentiation acting on one-forms, with the adjoint $\delta^s$ acting on symmetric $2$-tensors with respect to $g_{sc}$. Define ${\bf d}^s=\begin{pmatrix}d^s & 0\\-E & d\end{pmatrix}$ and $\boldsymbol{\delta}^s=\begin{pmatrix}\delta^s & -E^*\\ 0 & \delta\end{pmatrix}$, where $E$ is the Lorentz force. We introduce the operators ${\bf d}^s_F=e^{-F/x}W^{-1}{\bf d}^s W e^{F/x}$ and $\boldsymbol{\delta}^s_F=e^{F/x}W\boldsymbol{\delta}^s W^{-1}e^{-F/x}$, then similar to Lemma \ref{symbol 1} we compute their principal symbols:

\begin{Lemma}\label{symbol 2}
The principal symbol of $\boldsymbol{d}^s_F\in \mbox{\normalfont{Diff}}^{1,0}_{sc}(T^*_{sc}U\times \underline U, \, Sym^2T^*_{sc}U\times T^*_{sc}U)$ is $$\begin{pmatrix} \xi+iF & 0 & 0\\ \frac{1}{2}\eta\otimes & \frac{1}{2}(\xi+iF) & 0\\ \frac{1}{2}\eta\otimes & \frac{1}{2}(\xi+iF) & 0\\ a & \eta\otimes_s & 0\\
0 & 0 & \xi+iF\\ b & 0 & \eta\otimes\end{pmatrix};$$
while the principal symbol of $\boldsymbol{\delta}^s_F\in \mbox{\normalfont{Diff}}^{1,0}_{sc}(Sym^2T^*_{sc}U\times T^*_{sc}U,\, T^*_{sc}U\times\underline U)$ is 
$$\begin{pmatrix} \xi-iF & \frac{1}{2}\iota_{\eta} & \frac{1}{2}\iota_{\eta} & \<a,\cdot\> & 0 & \<b,\cdot\>\\ 0 & \frac{1}{2}(\xi-iF) & \frac{1}{2}(\xi-iF) & \iota_{\eta}^s & 0 & 0\\ 0 & 0 & 0 & 0 &\xi-iF & \iota_{\eta}\end{pmatrix}.$$
Here $a$ is a symmetric 2-tensor and $b$ is a 1-form, both independent of $F$.
\end{Lemma}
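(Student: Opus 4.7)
The strategy is to compute the principal symbol of $\boldsymbol{d}^s_F$ block by block, following the template of Lemma \ref{symbol 1}, and then obtain $\boldsymbol{\delta}^s_F$ by taking the formal adjoint with respect to $g_{sc}$. Unpacking the definition, for a 1-form $\beta$ and function $\varphi$,
$$\boldsymbol{d}^s_F\begin{pmatrix}\beta\\ \varphi\end{pmatrix} = e^{-F/x}\begin{pmatrix}1 & 0\\ 0 & x\end{pmatrix}\begin{pmatrix} d^s(e^{F/x}\beta)\\ -E(e^{F/x}\beta)+d(e^{F/x}x^{-1}\varphi)\end{pmatrix},$$
which exhibits three genuinely first-order pieces (the entries containing $d^s$ and $d$) and one zeroth-order piece (the Lorentz force $E$, together with commutators produced by the $W^{\pm 1}$ conjugation).

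For the upper block, $e^{-F/x}d^s e^{F/x}$ acts on $\beta$. Decomposing $\beta=\beta_0\,\tfrac{dx}{x^2}+\beta_y\cdot\tfrac{dy}{x}$ and writing the output symmetric 2-tensor in the scattering basis $\{(\tfrac{dx}{x^2})^{\otimes 2},\,(\tfrac{dx}{x^2})\otimes_s\tfrac{dy}{x},\,(\tfrac{dy}{x})^{\otimes_s 2}\}$, one computes the principal symbol exactly as in Lemma \ref{symbol 1}: the conjugation by $e^{\pm F/x}$ replaces $\xi$ by $\xi+iF$, the symmetrization in $d^s$ supplies the $\tfrac12$'s in rows 2 and 3, and the tangential derivatives produce the $\eta\otimes$ and $\eta\otimes_s$ terms. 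The $F$-independent entry $a$ in row 4, column 1 arises from the zeroth-order terms generated when $d^s$ is expressed in the scattering frame (the Christoffel-type coefficients coupling the $\beta_0\,dx/x^2$ component to the $(dy/x)^{\otimes_s 2}$ output), which enter the scattering principal symbol at the correct order. For the lower block, $xd(e^{F/x}x^{-1}\varphi)$ has first-order principal symbol $(\xi+iF,\,\eta\otimes)^T$ on $\varphi$, giving column 3 of rows 5-6, while the product rule on $x^{-1}$ produces only a lower-order remainder that can be absorbed. The term $-xE(e^{F/x}\beta)$ is zeroth order and $F$-independent, contributing the $b$ entry in row 6, column 1; the corresponding entry in row 5 vanishes because of the antisymmetry of $E$ with respect to $g$, so the $e_0$ coefficient of $E(e_0)$ is zero.

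The symbol of $\boldsymbol{\delta}^s_F$ is then the formal adjoint of the symbol of $\boldsymbol{d}^s_F$ with respect to $g_{sc}$: the $e^{\pm F/x}$ conjugation flips $+iF$ to $-iF$, the symbols $\eta\otimes$ and $\eta\otimes_s$ dualize to the contractions $\iota_\eta$ and $\iota_\eta^s$, and the zeroth-order entries $a,\,b$ become the pairings $\langle a,\cdot\rangle,\,\langle b,\cdot\rangle$, yielding the displayed $3\times 6$ matrix. The main obstacle is the careful bookkeeping of the zeroth-order contributions $a$ and $b$: they are genuinely part of the scattering principal symbol because the weights $W^{\pm 1}$ rescale zeroth-order bundle maps by $x^{\pm 1}$, and this scaling must be tracked simultaneously with the scattering-frame Christoffel symbols to confirm both their $F$-independence and their exact placement in the matrix.
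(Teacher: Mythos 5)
Your proposal is correct and follows essentially the same route as the paper: unpack $\boldsymbol{d}^s_F=e^{-F/x}W^{-1}\boldsymbol{d}^sWe^{F/x}$ into its $2\times 2$ block form $\begin{pmatrix}e^{-F/x}d^se^{F/x} & 0\\ -xE & xe^{-F/x}de^{F/x}x^{-1}\end{pmatrix}$, compute each block's scattering symbol (citing the $d^s$ and $d$ conjugate computations from the geodesic setting), identify $a$ and $b$ as the surviving zeroth-order contributions, and take the $g_{sc}$-adjoint for $\boldsymbol{\delta}^s_F$.

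The only place where your reasoning diverges in emphasis is the $-xE$ block. You argue that the $(dx/x^2,dx/x^2)$-entry (row 5, column 1) vanishes because the antisymmetry of $E$ with respect to $g$ forces $E^x_x=0$; while this is true, it is not the mechanism the paper uses, and it does not by itself explain why the $(dy/x,dy/x)$-entry also drops out (antisymmetry gives $E^y_y=-(E^y_y)^T$, not $E^y_y=0$). The paper's argument is uniform and more robust: rewriting $-xE$ in the scattering basis produces $\begin{pmatrix}-xE^x_x & -x^2E^y_x\\ -E^x_y & -xE^y_y\end{pmatrix}$, and every entry except the lower-left one carries a positive power of $x$ and thus vanishes at $x=0$. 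That $x$-weight argument, which you do invoke for the general structure, is what singles out the $-E^x_y$ contribution as $b$; you should lean on it for the vanishing of the other entries as well rather than on antisymmetry, which is a coincidental extra cancellation in the $(x,x)$ slot.
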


\begin{proof}
By definition $${\bf d}^s_F=\begin{pmatrix} e^{-F/x}d^s e^{F/x} & 0 \\ -xE & xe^{-F/x}d e^{F/x}x^{-1} \end{pmatrix},$$
and by \cite[Lemma 3.2]{SUV14} the symbol of $e^{-F/x}d^s e^{F/x}$ is 
$$\begin{pmatrix} \xi+iF & 0 \\ \frac{1}{2}\eta\otimes & \frac{1}{2}(\xi+iF) \\ \frac{1}{2}\eta\otimes & \frac{1}{2}(\xi+iF) \\ a & \eta\otimes_s \end{pmatrix},$$
where $a$ is a suitable symmetric $2$-tensor. On the other hand, the principal symbol of $xe^{-F/x}d e^{F/x}x^{-1}$ ($xe^{-F/x}x^2D_x e^{F/x}x^{-1}=x^2D_x+i(F+x)$, however the term $ix$ is of lower order) is
$$\begin{pmatrix}\xi+iF \\ \eta\otimes\end{pmatrix}.$$
Notice that in the scattering setting, the operator $-xE$ has the form
$$\begin{pmatrix}-xE_x^x & -x^2E^y_{x}\\ -E_{y}^x & -xE_{y}^y\end{pmatrix}$$
(note that $E$ is a $(1,1)$-tensor, locally $E=E^x_x dx\otimes \p_x+E^y_x dx\otimes \p_y+E^x_y dy\otimes \p_x+E^y_y dy\otimes \p_y=E^x_x \frac{dx}{x^2}\otimes x^2\p_x+xE^y_x \frac{dx}{x^2}\otimes x\p_y+x^{-1}E^x_y \frac{dy}{x}\otimes x^2\p_x+E^y_y \frac{dy}{x}\otimes x\p_y$), thus there is a non-trivial contribution from the term $-E_{y}^x$ at the boundary $x=0$, denoted by $b$. Then we combine above arguments to give the principal symbol of ${\bf d}^s_F$. Moreover, the principal symbol of $\boldsymbol{\delta}^s_F$ is the adjoint of the principal symbol of ${\bf d}^s_F$ w.r.t. $g_{sc}$.
\end{proof}

Now we introduce the Witten-type solenoidal gauge condition we will use in the paper in the spirit of \cite{SUV14}. The gauge for the operator $A_F$ is 
$$\boldsymbol{\delta}_F e^{-F/x}W^{-1}[\beta,\varphi]=\boldsymbol{\delta}_F[\beta,\varphi]_F=0;$$
while the gauge for the operator $B_F$ is
$$\boldsymbol{\delta}_F^s e^{-F/x}W^{-1}[h,\beta]=\boldsymbol{\delta}_F^s[h,\beta]_{F}=0.$$ 

\section{Ellipticity up to the gauge}

It is well known, see e.g. \cite{DPSU07}, that the maps
$$\Gamma_+: S\widetilde M\times [0,\infty)\rightarrow [\widetilde M\times\widetilde M; diag],\, \Gamma_+(z,v,t)=(z, |z'-z|, \frac{z'-z}{|z'-z|})$$
and
$$\Gamma_-: S\widetilde M\times (-\infty,0]\rightarrow [\widetilde M\times\widetilde M; diag],\, \Gamma_-(z,v,t)=(z, -|z'-z|, -\frac{z'-z}{|z'-z|})$$
are two diffeomorphisms near $S\widetilde M\times \{0\}$. Here $z'=\gamma_{z,v}(t)$, $[\widetilde M\times\widetilde M; diag]$ is the {\it blow-up} of $\widetilde M$ at the diagonal $z=z'$.

Similar to \cite{UV15}, we can also use $(x,y,|y'-y|,\frac{x'-x}{|y'-y|},\frac{y'-y}{|y'-y|})$ as the local coordinates on $\Gamma_+(\supp\tilde\chi\times[0,\delta_0))$; and analogously for $\Gamma_-(\supp\tilde\chi\times (-\delta_0,0])$ the coordinates are $(x,y,-|y'-y|,-\frac{x'-x}{|y'-y|},-\frac{y'-y}{|y'-y|})$. Indeed, this corresponds to the fact that we are using $(x,y,\l,\o)$ with $\o\in \S^{n-2}$, instead of $S\tM$, to parameterize curves, when $|y'-y|$ is large relative to $|x'-x|$, i.e. in our region of interest.

As we want to study the scattering behavior of the operators $A_F$ and $B_F$ up to the scattering front face $x=0$, we instead apply the scattering coordinates $(x,y,X,Y)$, where 
$$X=\frac{x'-x}{x^2}, \, Y=\frac{y'-y}{x}.$$
Under the scattering coordinates
$$dt\, d\lambda\, d\omega=x^2|Y|^{1-n}J(x,y,X,Y)\,dXdY$$
with $J|_{x=0}=1$.
Note that on the blow-up of the scattering diagonal, $\{X=0,Y=0\}$, in the region $|Y|>\epsilon |X|$, thus on the support of $\chi$
\begin{equation}\label{scattering coordinate}
(x,y,|Y|,\frac{X}{|Y|},\hat Y)\quad \mbox{and} \quad (x,y,-|Y|,-\frac{X}{|Y|},-\hat Y)
\end{equation}
are valid coordinates, $\hat Y=\frac{Y}{|Y|}$, with $\pm |Y|$ being the defining functions of the front face of this blow up. 

It was showed in \cite{UV15, SUV14} that under the coordinates \eqref{scattering coordinate} and the scattering tangent and cotangent bases
\begin{equation}\label{lambda omega}
\begin{split}
& g_{sc}\Big((\lambda\circ\Gamma_{\pm}^{-1})\p_x+(\omega\circ\Gamma_{\pm}^{-1})\p_y\Big)\\
= & x^{-1}\bigg(\Big(\pm\frac{X-\alpha(x,y,\pm x|Y|,\pm \frac{xX}{|Y|},\pm \hat Y)|Y|^2}{|Y|}+x\tilde\Lambda_\pm(x,y,x|Y|,\frac{x
  X}{|Y|},\hat Y)\Big)\,\frac{dx}{x^2}\\
&\qquad\qquad+\Big(\pm\hat Y+x|Y|\tilde\Omega_\pm(x,y,x|Y|,\frac{x
  X}{|Y|},\hat Y)\Big)\,\frac{h(\p_y)}{x}\bigg)
\end{split}
\end{equation}
and
\begin{equation}\label{lambda omega prime}
\begin{split}
& (\lambda'\circ\Gamma_{\pm}^{-1})\p_x+(\omega'\circ\Gamma_{\pm}^{-1})\p_y\\
= & x^{-1}\bigg(\Big(\pm \frac{X+\alpha(x,y,\pm x|Y|,\pm \frac{xX}{|Y|},\pm \hat
  Y)|Y|^2}{|Y|}+x|Y|^2\tilde\Lambda'_\pm(x,y,x|Y|,\frac{x
  X}{|Y|},\hat Y)\Big)\,x^2\p_x\\
&\qquad\qquad+\Big(\pm \hat Y+x|Y|\tilde\Omega'_\pm(x,y,x|Y|,\frac{x
  X}{|Y|},\hat Y)\Big)\,x\p_y\bigg).
\end{split}
\end{equation}
Notice that unlike the geodesic case, $\alpha$ is no longer a quadratic form. From now on we denote $\alpha(x,y,\pm x|Y|,\pm \frac{xX}{|Y|},\pm \hat Y)$ by $\alpha_\pm$ and $\frac{X-\alpha_\pm|Y|^2}{|Y|}$ by $S_\pm$, so $\frac{X+\alpha_\pm|Y|^2}{|Y|}=S_\pm+2\alpha_\pm|Y|$.


\subsection{Ellipticity of $A_F$}

According to the definition \eqref{define A_F} and the expressions \eqref{lambda omega}, \eqref{lambda omega prime}, the Schwartz kernel of $A_F$ at the scattering front face $x=0$, so $\lambda/x=S$, is
\begin{equation*}
K_A(0,y,X,Y)=e^{-FX}|Y|^{1-n}\Big\{\chi(S_+)\begin{pmatrix}A^+_{11} & A^+_{10}\\ A^+_{01} & A^+_{00}\end{pmatrix}+\chi(-S_-)\begin{pmatrix}A^-_{11} & -A^-_{10}\\ -A^-_{01} & A^-_{00}\end{pmatrix}\Big\},
\end{equation*}
where
\begin{equation*}
\begin{split}
A^\pm_{11}&=\Big(S_\pm\frac{dx}{x^2}+\hat Y \frac{dy}{x}\Big)\Big((S_\pm+2\alpha_\pm |Y|)(x^2\p_x)+\hat Y (x\p_y)\Big);\\
A^\pm_{10}&=S_\pm\frac{dx}{x^2}+\hat Y \frac{dy}{x};\\
A^\pm_{01}&=(S_\pm+2\alpha_\pm |Y|)(x^2\p_x)+\hat Y (x\p_y);\\
A^\pm_{00}&=1.
\end{split}
\end{equation*}
When $x=0$, $\alpha_\pm$ is simply $\alpha(0,y,0,0,\pm \hat Y)$. Since $\chi$ is an even function, it is easy to see that the term in the bracket of $K_A$ is even in $(X,Y)$. In particular, $A_F$ is a scattering pseudodifferential operator on $U$ of order $(-1,0)$, i.e. $A_F\in \Psi^{-1,0}_{sc}(U)$, see also \cite[Proposition 3.1]{SUV14}. Generally the Schwartz kernel of a scattering pseudodifferential operator has the form $x^\ell K$ with non-zero $K$ smooth in $(x,y)$ down to $x=0$. For our case, the zero in the superscript of $\Psi^{-1,0}_{sc}$ means exactly that $\ell=0$, while the number $-1$, related to $K$, has the meaning similar to the order of standard pseudodifferential operators. To show that $A_F$ is elliptic up to some gauge, we analyze the behavior of its principal symbol taking values at finite points and fiber infinity of $T_{sc}^*U$, in particular at the scattering front face $x=0$.

\begin{Lemma}\label{A_F at infinity}
For any $F>0$, $A_F$ is elliptic at the fiber infinity of $T_{sc}^*U$ when restricted on the kernel of the standard principal symbol of $\boldsymbol{\delta}_F$.
\end{Lemma}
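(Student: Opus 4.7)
The plan is to compute the leading-order behaviour of the scattering principal symbol of $A_F$ at fiber infinity of $T^*_{sc}U$ by Fourier-transforming the Schwartz kernel $K_A(0,y,X,Y)$ in $(X,Y)$, and then to verify invertibility of the resulting matrix on the standard-principal-symbol kernel of $\boldsymbol{\delta}_F$. First I would identify this gauge kernel at fiber infinity: the $\pm iF$ term in Lemma \ref{symbol 1} is subprincipal, so at a fiber-infinity point $(y,\xi,\eta)$ the kernel consists of pairs $[\beta,\varphi]$ with $\xi\beta_x + \iota_\eta\beta_y = 0$ (writing $\beta$ in the scattering frame $dx/x^2,\,dy/x$ as $(\beta_x,\beta_y)$), while $\varphi$ is left unconstrained at top order.

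The central step is to rescale $(\xi,\eta) = \tau(\xi_0,\eta_0)$ with $\tau\to\infty$ and dually $(X,Y) = (X'/\tau, Y'/\tau)$ in the Fourier integral $\int e^{-i(X\xi+Y\cdot\eta)} K_A(0,y,X,Y)\,dX\,dY$. The density $|Y|^{1-n}\,dX\,dY$ rescales to give the expected $\tau^{-1}$ prefactor consistent with $A_F\in \Psi^{-1,0}_{sc}(U)$, and the exponential weight $e^{-FX}$ disappears in the limit. The essential observation is that the magnetic-specific contributions enter only through $\alpha_\pm$ multiplying $|Y|^2$ in $S_\pm = (X-\alpha_\pm|Y|^2)/|Y|$ and $|Y|$ in the entry $A_{01}^\pm = (S_\pm+2\alpha_\pm|Y|)(x^2\partial_x)+\hat Y(x\partial_y)$. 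After rescaling both terms contribute $\alpha_\pm|Y'|/\tau$, which is $O(\tau^{-1})$ compared to the scale-invariant leading quantities $X'/|Y'|$ and $\hat Y$. Consequently the limit symbol at fiber infinity is exactly what one obtains by formally setting $\alpha_\pm = 0$, i.e.\ the principal symbol of the corresponding geodesic operator studied in \cite{SUV14}.

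Once this reduction is in place I would invoke the geodesic-case computation from \cite[Section 3]{SUV14}: the leading symbol, written as an integral over $\mathbb{S}^{n-2}$ against $\chi$, gives a Hermitian block matrix that is positive (hence injective) on the gauge kernel $\{\xi\beta_x + \iota_\eta\beta_y = 0\}$. The extra conjugation by $W=\mathrm{diag}(1,x^{-1})$ inserted in \eqref{define A_F} is precisely what places the four block entries $A_{11}^\pm, A_{10}^\pm, A_{01}^\pm, A_{00}^\pm$ at a common scattering order, so that the block-matrix principal symbol is well-defined and the gauge constraint can be imposed uniformly on the full pair $[\beta,\varphi]$.

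The main obstacle I expect is the bookkeeping for this block-homogeneity: one must carefully verify that in the scattering cotangent frame the four entries really are of the same scattering order after the $W^{-1}\cdot W$ conjugation, and that the off-diagonal coupling $2\alpha_\pm|Y|(x^2\partial_x)$ in $A_{01}^\pm$ does not interfere with the gauge-kernel restriction at top order. Both points amount to a careful tracking of definitions, but they are the technical heart of the magnetic case; once established, the positivity argument and hence ellipticity at fiber infinity follow from the geodesic analogue in \cite{SUV14}.
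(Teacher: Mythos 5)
Your first two steps are essentially the paper's: at fiber infinity the terms involving $\alpha_\pm$ and the exponential weight $e^{-FX}$ carry extra vanishing factors of $|Y|$ or $X$, so the standard principal symbol of $A_F$ reduces to
\[
|\zeta|^{-1}\int_{\zeta^{\perp}\cap(\mathbb R\times \mathbb S^{n-2})}\chi(\tilde S)\begin{pmatrix} \tilde S \\ \hat Y \\ 1\end{pmatrix}\begin{pmatrix}\tilde S & \hat Y & 1 \end{pmatrix}\,d\tilde S\, d\hat Y,
\]
and the $W$-conjugation is indeed what balances the scattering orders of the blocks. So far so good; your rescaling picture is a legitimate alternative way to see the same blow-down.

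The gap is in the final step, where you write that the resulting Hermitian matrix ``is positive (hence injective) on the gauge kernel'' by ``the geodesic analogue in \cite{SUV14}.'' That reference does not contain this statement: \cite{SUV14} treats pure tensor fields of a single order (with the corresponding divergence gauge). The object here is the combined operator acting on pairs $[\beta,\varphi]$, with a nontrivial $\varphi$-column in the $3\times 3$ matrix; in the geodesic setting this combination never arises because time-reversibility decouples the odd and even parts of the integrand. Verifying injectivity of the above form on $\{\xi\beta^0+\eta\cdot\beta'=0\}$ actually requires a new argument, and this is the substance of the lemma. Concretely one must show: if $\tilde S\beta^0+\hat Y\cdot\beta'+\varphi=0$ for all admissible $(\tilde S,\hat Y)$, then using the evenness of $\chi$ one replaces $(\tilde S,\hat Y)\mapsto(-\tilde S,-\hat Y)$ to split this into $\varphi=0$ and $\tilde S\beta^0+\hat Y\cdot\beta'=0$; and then one must exhibit $n-1$ generic elements of $\zeta^{\perp}\cap(\mathbb R\times\mathbb S^{n-2})$ meeting $\supp\chi$ (this is where $n\geq 3$ enters and where, combined with the gauge relation $\xi\beta^0+\eta\cdot\beta'=0$, one forces $\beta=0$). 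Without this decoupling-by-evenness and the dimension count, the proof is not complete; pointing to \cite{SUV14} does not supply it, and the ``main obstacle'' is this positivity argument, not the block-homogeneity bookkeeping you flag.
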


\begin{proof}
The analysis of the principal symbol of $A_F$ at fiber infinity is quite similar to the standard microlocal analysis of a pseudodifferential operator, i.e. the analysis of the conormal singularity of the standard principal symbol of $A_F$ at the diagonal, $X=Y=0$, see e.g. \cite[Lemma 3.4]{SUV14}.

Under our settings, we need to evaluate the integration of the restriction of the Schwartz kernel $K_A$ to the front face along the orthogonal equatorial sphere corresponding to $\zeta=(\xi,\eta)$, i.e. those $(\tilde S,\hat Y)$ with $\xi\tilde S+\eta\cdot\hat Y=0$. Here $\tilde S$ denotes $X/|Y|$. Notice that for this case the extra vanishing factor $|Y|=0$ in $\chi$ and $A_{ij}$, and the exponential conjugacy (as $X=0$) can be dropped. So by the evenness of $K_A$ the standard principal symbol of $A_F$ is essentially of the following form
\begin{equation*}
|\zeta|^{-1}\int_{\zeta^{\perp}\cap(\mathbb R\times \mathbb S^{n-2})}\chi(\tilde S)\begin{pmatrix} \tilde S \\ \hat Y \\ 1\end{pmatrix}\begin{pmatrix}\tilde S & \hat Y & 1 \end{pmatrix}\,d\tilde S d\hat Y.
\end{equation*} 

Given any non-zero pair $[\beta, \varphi]$, $\beta=(\beta^0,\beta')$, in the kernel of the standard principal symbol of $\boldsymbol{\delta}_F$, i.e. $\xi\beta^0+\eta\cdot \beta'=0$,
$$(\sigma_p(A_F)[\beta,\varphi],[\beta,\varphi])=c|\zeta|^{-1}\int_{\zeta^{\perp}\cap(\mathbb R\times \mathbb S^{n-2})}\chi(\tilde S)\,\Big|\tilde S \beta^0+\hat Y\cdot\beta'+\varphi\Big|^2\,d\tilde S d\hat Y.$$
Now to prove the ellipticity of $A_F$, it suffices to show that there is $(\tilde S,\hat Y)\in \zeta^{\perp}\cap(\mathbb R\times \mathbb S^{n-2})$ such that $\chi(\tilde S)>0$ and $\tilde S\beta^0+\hat Y\cdot\beta'+\varphi\neq 0$. We prove by contradiction, assume that for any $(\tilde S,\hat Y)\in \zeta^{\perp}\cap(\mathbb R\times \mathbb S^{n-2})$ with $\chi(\tilde S)>0$, $\tilde S\beta^0+\hat Y\cdot \beta'+\varphi=0$. Notice that if $\chi(\tilde S)>0$, then $\chi(-\tilde S)>0$, thus $-\tilde S\beta^0-\hat Y\cdot \beta'+\varphi=0$, which implies that $\tilde S\beta^0+\hat Y\cdot \beta'=0$ and $\varphi=0$.

On the other hand, we can find generic $n-1$ elements from the set $\{(\tilde S,\hat Y): \xi\tilde S+\eta\cdot \hat Y=0,\, \chi(\tilde S)>0\}$ (notice that here we need the dimension $n$ be at least $3$, if $n=2$ the set might be empty) with $\tilde S \beta^0+\hat Y\cdot \beta'=0$, by linear algebra, this implies that $\beta=0$ (since $\xi\beta^0+\eta\cdot \beta'=0$), which is a contradiction. This completes the proof.
\end{proof} 

\begin{Lemma}\label{A_F at finite}
For any $F>0$, there exists $\chi=\chi_F\in C^{\infty}_c(\mathbb R)$ such that $A_F$ is elliptic at finite points of $T_{sc}^*U$ when restricted on the kernel of the scattering principal symbol of $\boldsymbol{\delta}_F$.
\end{Lemma}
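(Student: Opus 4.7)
The scattering principal symbol $\sigma_{sc,p}(A_F)$ at a finite covector $(\xi,\eta)$ over the front face $x=0$ is the Euclidean Fourier transform of $K_A(0,y,\cdot,\cdot)$; the factor $e^{-FX}$ absorbed into $K_A$ effects a shift $\xi\mapsto\xi-iF$. By Lemma \ref{symbol 1}, the kernel of $\sigma_{sc,p}(\boldsymbol{\delta}_F)$ is $\{[\beta,\varphi]\colon (\xi-iF)\beta^0+\eta\cdot\beta'=0\}$, so it suffices to prove that the quadratic form $\langle\sigma_{sc,p}(A_F)[\beta,\varphi],[\beta,\varphi]\rangle$ is non-zero on this kernel.

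A direct computation using the scattering inner product gives $\langle M_\pm[\beta,\varphi],[\beta,\varphi]\rangle_{g_{sc}}=\mathcal L_\pm\,\mathcal N_\pm$, where
\begin{equation*}
\mathcal L_\pm=(\tilde S+\alpha_\pm^0|Y|)\beta^0+\hat Y\cdot\beta'\pm\varphi,\qquad \mathcal N_\pm=(\tilde S-\alpha_\pm^0|Y|)\beta^0+\hat Y\cdot\beta'\pm\varphi,
\end{equation*}
with $\tilde S=X/|Y|$ and $\alpha_\pm^0=\alpha(0,y,0,0,\pm\hat Y)$. The difference-of-squares identity $\mathcal L_\pm\mathcal N_\pm=(\tilde S\beta^0+\hat Y\cdot\beta'\pm\varphi)^2-(\alpha_\pm^0|Y|\beta^0)^2$ splits the pairing into a ``geodesic-like'' perfect-square contribution $Q_{\mathrm{geo}}$ and a ``magnetic'' correction $Q_{\mathrm{mag}}$ carrying an extra factor of $|Y|^2(\beta^0)^2$.

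My plan is to adapt the proof of \cite[Lemma 3.5]{SUV14} (cf.\ the analogous step in \cite{UV15}). For fixed $F>0$, select $\chi=\chi_F\in C_c^\infty(\R)$ with support in a sufficiently small interval $[-\epsilon,\epsilon]$. Using the analyticity provided by the Fourier--Laplace structure together with a contour-deformation argument in $\tilde S$, the real part of $Q_{\mathrm{geo}}$ will reduce (up to controlled remainders) to a manifestly positive integral along the affine slice $(\xi-iF)\tilde S+\eta\cdot\hat Y=0$, in the spirit of the slice integration that appears in the proof of Lemma \ref{A_F at infinity} but now regularized by $F$. The evaluation of $\chi$ at $S_\pm=\tilde S-\alpha_\pm^0|Y|$ rather than at $\tilde S$ is a smooth, compactly supported perturbation that is absorbed by shrinking $\epsilon$ further. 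The magnetic correction $Q_{\mathrm{mag}}$, being proportional to $|Y|^2$, corresponds via the transform to a scattering symbol of two orders lower than the leading part, and by a final shrinking of $\epsilon$ its contribution is dominated by the positive lower bound obtained for $Q_{\mathrm{geo}}$.

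The main obstacle will be the joint choice of small parameters: $F$ must be large relative to $\epsilon$ to secure the positive lower bound on $Q_{\mathrm{geo}}$, while $\epsilon$ must be small enough both to absorb the perturbation inside $\chi$ and to suppress $Q_{\mathrm{mag}}$. Compounding this, for magnetic flows $\alpha(z,v)\ne\alpha(z,-v)$ in general, so $\alpha_+^0\ne\alpha_-^0$, and the $\pm$ branches cannot be symmetrized as in the geodesic case; the positivity estimates must therefore be carried out independently on each branch before being summed, which is the principal new feature relative to \cite{SUV14}.
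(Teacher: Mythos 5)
Your proposal follows a genuinely different route from the paper, and as written it has gaps that would prevent it from closing.

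The paper's proof is a direct Gaussian computation: one takes $\chi(s)=e^{-s^2/(2F^{-1}\alpha)}$, computes the $X$- and $Y$-Fourier transforms of $K_A$ explicitly (using the evenness of $K_A$ in $(X,Y)$ to merge the $\pm$ branches into a single term), and lands on a closed-form expression for $\sigma_{sc}(A_F)$ as an integral over $\mathbb S^{n-2}$ of the rank-one matrix
$\begin{pmatrix} -\tfrac{(\xi+iF)\hat Y\cdot\eta}{\xi^2+F^2} \\ \hat Y \\ 1\end{pmatrix}\begin{pmatrix}-\tfrac{(\xi-iF)\hat Y\cdot\eta}{\xi^2+F^2} & \hat Y & 1\end{pmatrix}$
times a positive Gaussian weight. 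The quadratic form restricted to the gauge $(\xi-iF)\beta^0+\eta\cdot\beta'=0$ is therefore an integral of $|v(\hat Y)\cdot[\beta,\varphi]|^2\ge 0$, and a short linear-algebra/contradiction argument (using the evenness of the Gaussian in $\hat Y$, $n\ge 3$, and the gauge identity) shows it cannot vanish. One then approximates the Gaussian by $\chi_n\in C_c^\infty$. No largeness condition on $F$ is required; any $F>0$ works, which is consistent with the statement of the lemma.

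Your proposal instead wants to treat the symbol as a perturbation of a "geodesic-like" part plus a "magnetic" correction $Q_{\mathrm{mag}}$, secure a positive lower bound for the geodesic part on the gauge slice via contour deformation, and absorb $Q_{\mathrm{mag}}$ by shrinking the support of $\chi$. There are concrete problems with this plan. First, the factorization $\langle M_\pm[\beta,\varphi],[\beta,\varphi]\rangle=\mathcal L_\pm\mathcal N_\pm$ is not a nonnegative quadratic form, and the difference-of-squares rewriting pushes the entire sign indeterminacy into the correction $(\alpha_\pm^0|Y|\beta^0)^2$; the claim that this is "two orders lower" and hence dominated is not substantiated, and $Q_{\mathrm{geo}}$ is itself not obviously bounded below on the kernel of $\sigma_{sc}(\boldsymbol\delta_F)$ (the gauge condition involves $\xi-iF$, not $\xi$, so one cannot just quote the geodesic result). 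Second, the asserted contour deformation in $\tilde S$ to an affine slice $(\xi-iF)\tilde S+\eta\cdot\hat Y=0$ is not available for a compactly supported $\chi$ without further argument; this is precisely the difficulty the paper sidesteps by using a Gaussian, whose Fourier--Laplace transform is explicitly computable and has the right decay. Third, your stated parameter regime ("$F$ must be large relative to $\epsilon$") already departs from the lemma as stated, which asserts ellipticity for every $F>0$; the Gaussian approach delivers this uniformly in $F$, whereas your scheme would at best prove a weaker "for $F$ large" statement, which is the regime reserved in the paper for $B_F$ (Lemma \ref{B_F at finite}), not $A_F$. Finally, your observation that $\alpha(z,v)\ne\alpha(z,-v)$ and that the $\pm$ branches cannot be naively symmetrized is correct, but the paper handles it by exploiting the evenness of the full kernel $K_A$ in $(X,Y)$ (not of $\alpha$ in $\hat Y$) to combine the branches before Fourier transforming, which your proposal does not reproduce.
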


\begin{proof}
In order to find a suitable $\chi$ to make $A_F$ elliptic acting on the kernel of $\sigma_{sc}(\boldsymbol{\delta}_F)$, we follow the strategy of \cite{UV15}, namely we first do the calculation for a Gaussian function $\chi(s)=e^{-s^2/(2F^{-1}\alpha)}$ with $F>0$. Here $\chi$ does not have compact support, thus an approximation argument will be necessary at the end. The calculation of the Fourier transform of $K_A$ is similar to \cite[Lemma 4.1]{UV15} and \cite[Lemma 3.5]{SUV14}, for the sake of completeness, following we give the main steps. 

Denoting $F^{-1}\alpha_\pm$ by $\mu_\pm$, the $X$-Fourier transform of $K_A$, $\mathcal F_{X}K_A(0,y,\xi,Y)$, with $\chi$ chosen as above, is a non-zero multiple of
\begin{align*}
|Y|^{2-n}\bigg\{\sqrt{\mu_+}e^{i\alpha_+(\xi+iF)|Y|^2}\begin{pmatrix}D_{\nu}(D_{\nu}-2\alpha_+|Y|) & -D_{\nu}\hat Y & -D_{\nu}\\ \hat Y(-D_{\nu}+2\alpha_+|Y|) & \hat Y\cdot \hat Y & \hat Y \\ -D_{\nu}+2\alpha_+|Y| & \hat Y & 1\end{pmatrix}e^{-\mu_+(\xi+iF)^2|Y|^2/2}\\
+\sqrt{\mu_-}e^{i\alpha_-(\xi+iF)|Y|^2}\begin{pmatrix}D_{\nu}(D_{\nu}-2\alpha_-|Y|) & -D_{\nu}\hat Y & D_{\nu}\\ \hat Y(-D_{\nu}+2\alpha_-|Y|) & \hat Y\cdot \hat Y & -\hat Y \\ D_{\nu}-2\alpha_-|Y| & -\hat Y & 1\end{pmatrix}e^{-\mu_-(\xi+iF)^2|Y|^2/2}\bigg\},
\end{align*}
where $D_{\nu}$ is the differentiation with respect to the variable of $\hat{\chi}$, i.e. $-(\xi+iF)|Y|$. Taking the derivatives, by polar coordinates the $Y$-Fourier transform takes the form
\begin{equation*}
\begin{split}
& \int_{\mathbb S^{n-2}}\int_0^{\infty}e^{i|Y|\hat Y\cdot\eta}\times\\
& \bigg\{\sqrt{\mu_+}\begin{pmatrix}i\mu_+(\xi+iF)i\mu_+(\xi-iF)|Y|^2+\mu_+ & i\mu_+(\xi+iF)|Y|\hat Y & i\mu_+(\xi+iF)|Y|\\ i\mu_+(\xi-iF)\hat Y |Y| & \hat Y\cdot \hat Y & \hat Y \\ i\mu_+(\xi-iF)|Y| & \hat Y & 1\end{pmatrix}\\
 & \times e^{-\mu_+(\xi^2+F^2)|Y|^2/2}\\
 & +\sqrt{\mu_-}\begin{pmatrix}i\mu_-(\xi+iF)i\mu_-(\xi-iF)|Y|^2+\mu_- & i\mu_-(\xi+iF)|Y|\hat Y & -i\mu_-(\xi+iF)|Y|\\ i\mu_-(\xi-iF)\hat Y |Y| & \hat Y\cdot \hat Y & -\hat Y \\ -i\mu_-(\xi-iF)|Y| & -\hat Y & 1\end{pmatrix}\\
 & \times e^{-\mu_-(\xi^2+F^2)|Y|^2/2}\bigg\}\,d\hat Y d|Y|.
 \end{split}
\end{equation*}
Since the integrand is invariant under the changes from $|Y|$ to $-|Y|$, $\hat Y$ to $-\hat Y$ (thanks to the evenness from $K_A$), we have that the integral above equals
\begin{equation*}
\begin{split}
 \int_{\mathbb S^{n-2}}&\int_{\mathbb R} e^{i(\hat Y\cdot\eta)t}\sqrt{\mu_+}\\
 &\begin{pmatrix}i\mu_+(\xi+iF)i\mu_+(\xi-iF)t^2+\mu_+ & i\mu_+(\xi+iF)t\hat Y & i\mu_+(\xi+iF)t\\ i\mu_+(\xi-iF)\hat Y t & \hat Y\cdot \hat Y & \hat Y \\ i\mu_+(\xi-iF)t & \hat Y & 1\end{pmatrix}\\
 & \times e^{-\mu_+(\xi^2+F^2)t^2/2}\,d\hat Y dt,
\end{split}
\end{equation*}
which gives a constant multiple of
\begin{equation*}
\begin{split}
\int_{\mathbb S^{n-2}} &\frac{1}{\sqrt{\xi^2+F^2}}\\
& \begin{pmatrix}i\mu_+(\xi+iF)i\mu_+(\xi-iF)D_{\hat Y\cdot \eta}^2+\mu_+ & i\mu_+(\xi+iF)\hat Y D_{\hat Y\cdot\eta} & i\mu_+(\xi+iF)D_{\hat Y\cdot\eta}\\ i\mu_+(\xi-iF)\hat Y D_{\hat Y\cdot\eta} & \hat Y\cdot \hat Y & \hat Y \\ i\mu_+(\xi-iF)D_{\hat Y\cdot\eta} & \hat Y & 1\end{pmatrix}\\
& \times e^{-|\hat Y\cdot \eta|^2/2\mu_+(\xi^2+F^2)}\,d\hat Y\\
=& \int_{\mathbb S^{n-2}} \frac{1}{\sqrt{\xi^2+F^2}}\begin{pmatrix}A_{xx} & A_{xy} & A_{x0}\\ A_{yx} & A_{yy} & A_{y0} \\ A_{0x} & A_{0y} & A_{00}\end{pmatrix} e^{-|\hat Y\cdot \eta|^2/2\mu(\xi^2+F^2)}\,d\hat Y,
\end{split}
\end{equation*}
where
\begin{equation*}
\begin{split}
A_{xx} & =(\xi+iF)(\xi-iF)\frac{|\hat Y\cdot\eta|^2}{(\xi^2+F^2)^2},\\
A_{xy} & =-(\xi+iF)\frac{\hat Y\cdot\eta}{(\xi^2+F^2)}\hat Y,\\
A_{x0} & =-(\xi+iF) \frac{\hat Y\cdot\eta}{(\xi^2+F^2)},\\
A_{yx} & =-\hat Y(\xi-iF)\frac{\hat Y\cdot\eta}{(\xi^2+F^2)},\\
A_{yy} & =\hat Y\cdot \hat Y,\\
A_{y0} & =\hat Y,\\
A_{0x} & =-(\xi-iF)\frac{\hat Y\cdot\eta}{(\xi^2+F^2)},\\
A_{0y} & =\hat Y,\\
A_{00} & =1.
\end{split}
\end{equation*}
Therefore, the scattering principal symbol of $A_F$, $\sigma_{sc}(A_F)$, is a non-zero multiple of
\begin{equation*}
\int_{\mathbb S^{n-2}} \frac{1}{\sqrt{\xi^2+F^2}}\begin{pmatrix} -\frac{(\xi+iF)\hat Y\cdot\eta}{\xi^2+F^2} \\ \hat Y \\ 1\end{pmatrix}\begin{pmatrix}-\frac{(\xi-iF)\hat Y\cdot\eta}{\xi^2+F^2} & \hat Y & 1\end{pmatrix} e^{-|\hat Y\cdot \eta|^2/2\mu(\xi^2+F^2)}\,d\hat Y.
\end{equation*}

Given any non-zero pair $[\beta, \varphi]$, $\beta=(\beta^0,\beta')$, in the kernel of the scattering principal symbol of $\boldsymbol{\delta}_F$, i.e. $(\xi-iF)\beta^0+\eta\cdot \beta'=0$ by Lemma \ref{symbol 1},
\begin{equation*}
\begin{split}
(\sigma_{sc}(A_F) & [\beta,\varphi],[\beta,\varphi])\\
= & \frac{c}{\sqrt{\xi^2+F^2}}\int_{\mathbb S^{n-2}}\Big|-\frac{(\xi-iF)\hat Y\cdot\eta}{\xi^2+F^2} \beta^0+\hat Y\cdot\beta'+\varphi\Big|^2e^{-|\hat Y\cdot \eta|^2/2\mu(\xi^2+F^2)}\,d\hat Y.
\end{split}
\end{equation*}
To prove the ellipticity, it suffices to show that there is $\hat Y$ such that $-\frac{(\xi-iF)\hat Y\cdot\eta}{\xi^2+F^2} \beta^0+\hat Y\cdot\beta'+\varphi\neq 0$. Again, we prove by contradiction, assume that for any $\hat Y\in \mathbb S^{n-2}$, $-\frac{(\xi-iF)\hat Y\cdot\eta}{\xi^2+F^2} \beta^0+\hat Y\cdot\beta'+\varphi$ always vanishes. Then $\frac{(\xi-iF)\hat Y\cdot\eta}{\xi^2+F^2} \beta^0-\hat Y\cdot\beta'+\varphi=0$ too, which implies that $\varphi=0$ and $-\frac{(\xi-iF)\hat Y\cdot\eta}{\xi^2+F^2} \beta^0+\hat Y\cdot\beta'=0$ for all $\hat Y$.

On the other hand, since $(\xi-iF)\beta^0+\eta\cdot \beta'=0$,
$$-\frac{(\xi-iF)\hat Y\cdot\eta}{\xi^2+F^2} \beta^0+\hat Y\cdot\beta'=\frac{1}{\xi^2+F^2}(\eta\cdot \beta')(\hat Y\cdot \eta)+\hat Y\cdot \beta'=0$$ 
for all $\hat Y$. It is not difficult to see that this implies that $\beta'=0$, so $\beta^0=-(\xi-iF)^{-1}\eta\cdot\beta'=0$ too. 
Thus we reach a contradiction as $[\beta,\varphi]$ is a non-zero pair, and this establishes the ellipticity of $A_F$ for Gaussian type $\chi$. 

Finally we pick a sequence ${\chi_n}\in C_c^{\infty}(\R)$ which converges to the Gaussian in Schwartz functions, then the Fourier transforms $\hat{\chi_n}$ converge to $\hat{\chi}$. One concludes that for some large enough $n$, if we use $\chi_n$ to define the operator $A_F$, then its principal symbol is still elliptic as desired.
\end{proof}

Combining Lemma \ref{A_F at infinity} and \ref{A_F at finite} we get the following ellipticity result

\begin{Proposition}\label{A_F elliptic}
For any $F>0$, given $\Omega$ a neighborhood of $O$ in $U$, there exist $\chi\in C^{\infty}_{c}(\mathbb R)$ and $N\in \Psi^{-3,0}_{sc}(U;\underline U,\underline U)$ such that 
$A_F+{\bf d}_F N \boldsymbol{\delta}_F\in \Psi_{sc}^{-1,0}(U; T^*_{sc}U\times \underline{U},T^*_{sc}U\times \underline{U})$ is elliptic in $\Omega$.
\end{Proposition}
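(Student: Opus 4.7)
The plan is to use the two preceding lemmas together with a gauge correction of the form ${\bf d}_F N \boldsymbol{\delta}_F$ that restores ellipticity on the directions complementary to $\ker \sigma(\boldsymbol{\delta}_F)$, where $A_F$ itself may be degenerate. First I would fix $\chi = \chi_F \in C_c^\infty(\mathbb R)$ as produced by Lemma \ref{A_F at finite}, so that both Lemma \ref{A_F at infinity} and Lemma \ref{A_F at finite} apply to the same $A_F$: the standard principal symbol (at fiber infinity) and the scattering principal symbol (at the front face $x = 0$) of $A_F$ are positive definite on the respective kernels of $\sigma(\boldsymbol{\delta}_F)$. Then I would take $N = c_0 N_0$ with $N_0 \in \Psi^{-3,0}_{sc}(U; \underline U, \underline U)$ a fixed positive, self-adjoint, elliptic scattering pseudodifferential operator and $c_0 > 0$ a constant to be chosen large. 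Order counting gives ${\bf d}_F N \boldsymbol{\delta}_F \in \Psi^{-1,0}_{sc}$, matching $A_F$, and by Lemma \ref{symbol 1} its principal symbol is $c_0\,\sigma(N_0)\,\sigma({\bf d}_F)\sigma({\bf d}_F)^*$, a positive semidefinite Hermitian form vanishing exactly on $\ker \sigma(\boldsymbol{\delta}_F)$ and bounded below by $c_0 c_1 I$ on the orthogonal complement, with $c_1 > 0$ uniform over the compact base region.

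Next I would verify positivity of $\sigma(A_F + {\bf d}_F N \boldsymbol{\delta}_F)$ pointwise. For any $v$ over $\bar\Omega$, decompose $v = v_1 + v_2$ with $v_1 \in \ker \sigma(\boldsymbol{\delta}_F)$ and $v_2$ in its orthogonal complement. Since $\sigma(\boldsymbol{\delta}_F) v_1 = 0$,
\begin{equation*}
(\sigma(A_F + {\bf d}_F N \boldsymbol{\delta}_F) v, v) = (\sigma(A_F) v, v) + (\sigma({\bf d}_F N \boldsymbol{\delta}_F) v_2, v_2).
\end{equation*}
Lemmas \ref{A_F at infinity} and \ref{A_F at finite} give $(\sigma(A_F) v_1, v_1) \geq c_2 |v_1|^2$, while continuity on the compact scattering cosphere bundle over $\bar\Omega$ yields $(\sigma(A_F) v_2, v_2) \leq C |v_2|^2$. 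The cross term is controlled by Cauchy--Schwarz applied to the nonnegative form $\sigma(A_F)$ combined with AM-GM, giving
\begin{equation*}
|2\,\mathrm{Re}\,(\sigma(A_F) v_1, v_2)| \leq \epsilon\,(\sigma(A_F) v_1, v_1) + \epsilon^{-1}(\sigma(A_F) v_2, v_2)
\end{equation*}
for any $\epsilon \in (0,1)$. Hence $(\sigma(A_F) v, v) \geq (1 - \epsilon) c_2 |v_1|^2 - \epsilon^{-1} C |v_2|^2$. Choosing $\epsilon$ small, then $c_0$ large enough that $c_0 c_1 - \epsilon^{-1} C > 0$, produces a uniform lower bound $(\sigma(A_F + {\bf d}_F N \boldsymbol{\delta}_F) v, v) \geq c'|v|^2$.

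The main obstacle is that this positivity must be established simultaneously in two different regimes of the scattering calculus, at fiber infinity of $T^*_{sc}U$ and at the scattering front face $x = 0$, using the same $\chi$ and the same $N$. Lemmas \ref{A_F at infinity} and \ref{A_F at finite} were designed to handle exactly these two regimes with a common $\chi_F$, and uniformity of the constants $c_1, c_2, C$ over the compact region follows from the explicit symbol formulas together with the compactness of the scattering cosphere bundle over $\bar\Omega$. A minor but real technical point is that the orthogonal decomposition $v = v_1 + v_2$ must depend continuously on the base point and direction so that the bounds are uniform; this is automatic since $\ker \sigma(\boldsymbol{\delta}_F)$ has locally constant rank (visible from the symbol in Lemma \ref{symbol 1}). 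Once the principal symbol of $A_F + {\bf d}_F N \boldsymbol{\delta}_F$ is pointwise positive definite in both senses throughout $\Omega$, this yields the claimed scattering ellipticity in $\Psi^{-1,0}_{sc}(U; T^*_{sc}U \times \underline U, T^*_{sc}U \times \underline U)$.
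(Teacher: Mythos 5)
Your plan---fixing $\chi=\chi_F$ from Lemma~\ref{A_F at finite}, taking $N=c_0N_0$ with $N_0$ a positive elliptic scattering operator of order $(-3,0)$, and deducing ellipticity of the sum from the two lemmas via a pointwise symbol decomposition and compactness---is the standard argument the paper has in mind (the paper defers the detail to \cite{SUV14}), and the constant-rank observation about $\ker\sigma(\boldsymbol{\delta}_F)$ is correct. One step needs care: the Cauchy--Schwarz bound on the cross term invokes non-negativity of $\sigma(A_F)$. For the standard symbol at fiber infinity this holds for any $\chi\ge 0$ directly from its integral form, but for the scattering principal symbol at $x=0$ it is established in the proof of Lemma~\ref{A_F at finite} only for the Gaussian model $\chi$; for the compactly supported $\chi_F$ obtained by approximation one only knows that $\sigma_{sc}(A_F)$ is close to a PSD form, not that it is PSD. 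Two easy repairs: either run the positivity argument for the Gaussian $\chi$, where both $\sigma_p(A_F)$ and $\sigma_{sc}(A_F)$ are manifestly Hermitian PSD (rank-one integrals against positive weights), and approximate by $\chi_n$ only at the end exactly as in the proof of Lemma~\ref{A_F at finite}, since a uniform positive lower bound on the symbol persists under uniform convergence; or replace the PSD Cauchy--Schwarz estimate by the crude operator-norm bound $|2\,\mathrm{Re}(\sigma(A_F)v_1,v_2)|\le C''|v_1|\,|v_2|$ available from compactness, which needs no sign hypothesis and fits your $c_0$-large scheme. A further remark: once PSD of $\sigma(A_F)$ is in hand the $c_0$-large step is superfluous---any $c_0>0$ already yields pointwise positive definiteness, since $\sigma(\boldsymbol{d}_FN\boldsymbol{\delta}_F)$ is PSD with kernel exactly $\ker\sigma(\boldsymbol{\delta}_F)$, on which $\sigma(A_F)$ is positive by the two lemmas, so the two nonnegative forms have trivial common kernel and compactness gives the uniform lower bound.
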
 

This essentially proves the invertibility of the operator $Ie^{F/x}W$ acting on pairs of functions and one-forms under the gauge condition.

\subsection{Ellipticity of $B_F$}

The analysis of $B_F$ is similar to the case of $A_F$ but more complicated. According to the definition \eqref{define B_F} and the expressions \eqref{lambda omega}, \eqref{lambda omega prime}, the Schwartz kernel of $B_F$ at the scattering front face $x=0$ is
\begin{equation*}
K_B(0,y,X,Y)=e^{-FX}|Y|^{1-n}\Big\{\chi(S_+)\begin{pmatrix}B^+_{22} & B^+_{21}\\ B^+_{12} & B^+_{11}\end{pmatrix}+\chi(-S_-)\begin{pmatrix}B^-_{22} & -B^-_{21}\\ -B^-_{12} & B^-_{11}\end{pmatrix}\Big\},
\end{equation*}
where
\begin{equation*}
\begin{split}
B^\pm_{22}&=\Big(\Big(S_\pm\frac{dx}{x^2}+\hat Y \frac{dy}{x}\Big)\otimes \Big(S_\pm\frac{dx}{x^2}+\hat Y \frac{dy}{x}\Big)\Big)\\
& \quad \quad \quad\Big(\Big((S_\pm+2\alpha_\pm |Y|)(x^2\p_x)+\hat Y (x\p_y)\Big)\otimes \Big((S_\pm+2\alpha_\pm |Y|)(x^2\p_x)+\hat Y (x\p_y)\Big)\Big);\\
B^\pm_{21}&=\Big(\Big(S_\pm\frac{dx}{x^2}+\hat Y \frac{dy}{x}\Big)\otimes \Big(S_\pm\frac{dx}{x^2}+\hat Y \frac{dy}{x}\Big)\Big)\Big((S_\pm+2\alpha_\pm |Y|)(x^2\p_x)+\hat Y (x\p_y)\Big);\\
B^\pm_{12}&=\Big(S_\pm\frac{dx}{x^2}+\hat Y \frac{dy}{x}\Big)\\
& \quad \quad \quad \Big(\Big((S_\pm+2\alpha_\pm |Y|)(x^2\p_x)+\hat Y (x\p_y)\Big)\otimes \Big((S_\pm+2\alpha_\pm |Y|)(x^2\p_x)+\hat Y (x\p_y)\Big)\Big);\\
B^\pm_{11}&=\Big(S_\pm\frac{dx}{x^2}+\hat Y \frac{dy}{x}\Big)\Big((S_\pm+2\alpha_\pm |Y|)(x^2\p_x)+\hat Y (x\p_y)\Big)=A^\pm_{11}.
\end{split}
\end{equation*}
Thus $B_F$ is a scattering pseudodifferential operator of order $(-1,0)$ too. Next we show that $B_F$ is elliptic up to the gauge $\bdelta^s_F$.

\begin{Lemma}\label{B_F at infinity}
For any $F>0$, $B_F$ is elliptic at the fiber infinity of $T_{sc}^*U$ when restricted on the kernel of the standard principal symbol of $\boldsymbol{\delta}^s_F$.
\end{Lemma}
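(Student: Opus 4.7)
The plan is to mirror the proof of Lemma \ref{A_F at infinity}, adapted to the higher-rank blocks of $B_F$. At the standard principal symbol level, namely at fiber infinity of $T^*_{sc}U$, the relevant integration takes place on the orthogonal equatorial sphere to $\zeta=(\xi,\eta)$, on which $X=0$, so the exponential conjugacy $e^{-FX}$ is trivial; moreover the vanishing factor $|Y|=0$ eliminates every occurrence of $\alpha_\pm|Y|$ inside $B^\pm_{22}, B^\pm_{21}, B^\pm_{12}, B^\pm_{11}$. Writing $V=(\tilde S,\hat Y)$ with $\tilde S=X/|Y|$, the four blocks collapse to outer products of $V$ with itself of the appropriate tensor order, and by evenness of $\chi$ the $\pm$ contributions combine into a single symmetric kernel. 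Up to a nonzero scalar multiple of $|\zeta|^{-1}$, the standard principal symbol of $B_F$ thus takes the form
\begin{equation*}
\int_{\zeta^\perp\cap(\mathbb R\times\mathbb S^{n-2})}\chi(\tilde S)\begin{pmatrix} V\otimes V \\ V\end{pmatrix}\begin{pmatrix} V\otimes V & V\end{pmatrix}\,d\tilde S\,d\hat Y.
\end{equation*}

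Next I would pair this against an arbitrary nonzero $[h,\beta]\in\Ker\sigma_p(\bdelta^s_F)$. Since the natural action of $V\otimes V$ and $V$ on $h$ and $\beta$ produces $h(V,V)$ and $\beta(V)$ respectively, the resulting quadratic form reads
\begin{equation*}
(\sigma_p(B_F)[h,\beta],[h,\beta])=c|\zeta|^{-1}\int_{\zeta^\perp\cap(\mathbb R\times\mathbb S^{n-2})}\chi(\tilde S)\,|h(V,V)+\beta(V)|^2\,d\tilde S\,d\hat Y.
\end{equation*}
Arguing by contradiction, if this vanishes then $h(V,V)+\beta(V)=0$ on the set $\{\chi(\tilde S)>0\}\cap\zeta^\perp$. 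Evenness of $\chi$ under $(\tilde S,\hat Y)\mapsto(-\tilde S,-\hat Y)$ preserves admissibility and $h(V,V)$ while flipping the sign of $\beta(V)$, so $h(V,V)=0$ and $\beta(V)=0$ must vanish separately there.

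Finally, the kernel constraint from Lemma \ref{symbol 2}, read at top order by discarding the zeroth-order contributions of $F$, $a$, $b$, imposes $\iota_\zeta h=0$ and $\iota_\zeta\beta=0$. Hence $\beta$ lies in $\zeta^\perp$ and $h$ is determined by its restriction to $\zeta^\perp\times\zeta^\perp$. Because $n\geq 3$, the admissible directions $V\in\zeta^\perp$ with $\chi(\tilde S)>0$ contain an open subset of $\zeta^\perp$; the linear relation $\beta(V)=0$ then forces $\beta=0$, while polarization of the quadratic relation $h(V,V)=0$ extends it to $h(V,V')=0$ for all $V,V'\in\zeta^\perp$, so $h=0$, giving the required contradiction. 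The main obstacle I anticipate is this tensorial polarization step: unlike the $A_F$ case, one needs enough independent admissible directions in $\zeta^\perp$ to pin down a symmetric 2-tensor rather than a 1-form, which is precisely why the dimension hypothesis $n\geq 3$ enters exactly as in Lemma \ref{A_F at infinity}; everything else is a direct adaptation with one additional tensor slot.
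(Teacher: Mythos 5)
Your computation of the standard principal symbol and the even/odd decoupling into $h(V,V)=0$ and $\beta(V)=0$ on the admissible set matches the paper's argument exactly, as does the reformulation of the kernel condition as $\iota_\zeta h=0$, $\iota_\zeta\beta=0$. The gap is in the final step. The set of admissible $V=(\tilde S,\hat Y)$ lies inside $\zeta^\perp\cap(\mathbb R\times\mathbb S^{n-2})$, which has dimension $n-2$ because of the constraint $|\hat Y|=1$; it therefore does \emph{not} contain an open subset of the $(n-1)$-dimensional hyperplane $\zeta^\perp$, so your appeal to ``a polynomial vanishing on an open set vanishes identically'' (and the polarization that follows) does not apply as written. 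For a polynomial to be forced to vanish on $\zeta^\perp$, you need it to vanish on a set with nonempty interior, and a codimension-one subset of $\zeta^\perp$ does not have that property in general (indeed, for a generic codimension-one set there are plenty of nonzero quadratic forms vanishing on it). The fix is to use homogeneity: $h(V,V)$ and $\beta(V)$ are homogeneous of degree 2 and 1 in $V$, so vanishing on the admissible set implies vanishing on the positive cone $\{(S,Y)\in\zeta^\perp: Y\neq 0,\ \chi(S/|Y|)>0\}$, which (assuming $\chi(0)>0$ and $n\geq 3$) is a nonempty open subset of $\zeta^\perp$. Only after this reduction do the polynomial identity and polarization arguments give $h|_{\zeta^\perp\times\zeta^\perp}=0$, and then $\iota_\zeta h=0$ yields $h=0$; similarly $\beta=0$. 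This is the same point that makes the dimension hypothesis $n\geq 3$ essential, and it is the step you flagged as the main obstacle.

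Once repaired, your route is genuinely different from, and arguably cleaner than, the paper's. The paper does not invoke polynomial vanishing on an open cone at all: it works block by block, setting $\tilde S=0$ to kill $h_{yy}|_{\eta^\perp}$, then splitting into cases $\xi=0$ and $\xi\neq 0$, using the kernel relations to express $h_{xy}$ and $h_{xx}$ in terms of $h_{yy}$, and running an explicit $\varepsilon$-perturbation argument to obtain $(\eta\otimes\eta)\cdot h_{yy}=0$ and $(\eta\otimes Y+Y\otimes\eta)\cdot h_{yy}=0$ before concluding $h_{yy}=0$, $h_{xy}=0$, $h_{xx}=0$. Your conceptual approach compresses all of this into a single homogeneity and analytic-continuation step, which is more transparent and generalizes readily; the paper's approach is more elementary and does not require thinking about the cone structure, but is noticeably longer.
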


\begin{proof}
Similar to the argument in Lemma \ref{A_F at infinity}, the standard principal symbol of $B_F$ at $\zeta=(\xi,\eta)$ is essentially the following
\begin{align*}
|\zeta|^{-1}\int_{\zeta^{\perp}\cap (\mathbb R\times \mathbb S^{n-2})}\chi(\tilde S)\begin{pmatrix} \tilde S^2 \\ \tilde S\hat Y_1 \\ \tilde S \hat Y_2 \\ \hat Y_1\otimes \hat Y_2 \\ \tilde S \\ \hat Y \end{pmatrix}\begin{pmatrix} \tilde S^2 & \tilde S\hat Y_1 & \tilde S \hat Y_2 & \hat Y_1\otimes \hat Y_2 & \tilde S & \hat Y\end{pmatrix}\,d\tilde S d\hat Y.
\end{align*}
Here subscripts 1 and 2 of $\hat Y$ indicate the position of the factors of a 2-tensor it is acting on.

Given a non-zero pair $[h,\beta]$, $h=(h_{xx},h_{xy},h_{yx},h_{yy})$ with $h_{xy}=h_{yx}^T$ and $\beta=(\beta_x,\beta_y)$, assuming $\sigma_p(\bdelta^s_F)[h,\beta]=0$, i.e. 
\begin{equation}\label{kernel of delta^s_F}
\xi h_{xx}+\eta\cdot h_{xy}=0,\quad \xi h_{xy}+\frac{1}{2}(\eta_1+\eta_2)\cdot h_{yy}=0\quad \mbox{and} \quad \xi\beta_x+\eta\cdot \beta_y=0,
\end{equation}
then 
\begin{align*}
&(\sigma_p ( B_F)[h,\beta], [h,\beta])\\
= & c|\zeta|^{-1}\times\\
& \int_{\zeta^{\perp}\cap (\mathbb R\times \mathbb S^{n-2})}\chi(\tilde S)|\tilde S^2h_{xx}+\tilde S(h_{xy}\cdot \hat Y_1+\hat Y_2\cdot h_{xy})+(\hat Y_1\otimes \hat Y_2)\cdot h_{yy}+\tilde S \beta_x+\hat Y\cdot \beta_y|^2\,d\tilde S d\hat Y.
\end{align*}
Now if the integral equals zero, we get that 
$$\tilde S^2h_{xx}+\tilde S(h_{xy}\cdot \hat Y_1+\hat Y_2\cdot h_{xy})+(\hat Y_1\otimes \hat Y_2)\cdot h_{yy}+\tilde S \beta_x+\hat Y\cdot \beta_y=0$$
for all $(\tilde S,\hat Y)$ satisfying $\xi\tilde S+\eta\cdot \hat Y=0$, $\chi(\tilde S)>0$. Notice that $\chi$ is even, this implies that
\begin{equation}\label{decouple}
\tilde S^2h_{xx}+\tilde S(h_{xy}\cdot \hat Y_1+\hat Y_2\cdot h_{xy})+(\hat Y_1\otimes \hat Y_2)\cdot h_{yy}=0\quad \mbox{and} \quad \tilde S \beta_x+\hat Y\cdot \beta_y=0
\end{equation}
for such $(\tilde S,\hat Y)$. Since $\xi\beta_x+\eta\cdot \beta_y=0$, it is shown in the proof of Lemma \ref{A_F at infinity} that $(\beta_x,\beta_y)=0$.

On the other hand, assume $\tilde S=0$, then by the first equality of \eqref{decouple}, $\hat Y\cdot \eta=0$ implies that $(\hat Y_1\otimes \hat Y_2)\cdot h_{yy}=0$ for all $\hat Y\in \eta^{\perp}\cap \mathbb S^{n-2}$ (notice that $h_{yy}$ is a symmetric $(n-1)\times (n-1)$ matrix). Then to show that $h_{yy}=0$, it suffices to verify that $(\eta\otimes \eta)\cdot h_{yy}=0$. If $\eta=0$ then it's done, so we assume that $\eta\neq 0$. Since $|\tilde S|$ needs to be small to guarantee that $\chi(\tilde S)>0$, we denote $\eta\cdot \hat Y=-\tilde S\xi$ by $\varepsilon$ with $|\varepsilon|\ll 1$, then $\hat Y$ can be decomposed as $\hat Y=\frac{\varepsilon}{|\eta|}\frac{\eta}{|\eta|}+\hat Y^{\perp}$, where $\hat Y^{\perp}$ is the projection of $\hat Y$ in $\eta^{\perp}$. If $\xi=0$, by \eqref{kernel of delta^s_F} $(\eta_1+\eta_2)\cdot h_{yy}=0$, so is $(\eta\otimes\eta)\cdot h_{yy}$. If $\xi\neq 0$, by \eqref{kernel of delta^s_F} again, we have 
$$h_{xy}=-\frac{1}{2\xi}(\eta_1+\eta_2)\cdot h_{yy},\quad h_{xx}=-(\eta\cdot h_{xy})/\xi=\frac{1}{\xi^2}(\eta\otimes \eta)\cdot h_{yy}.$$
Plug above equalities into the first part of \eqref{decouple}, then 
$$\Big(\frac{\tilde S^2}{\xi^2}(\eta\otimes \eta)-\frac{\tilde S}{\xi}(\eta\otimes \hat Y+\hat Y\otimes \eta)+(\hat Y\otimes \hat Y)\Big)\cdot h_{yy}=0,$$
equivalently
$$\Big((\frac{\varepsilon}{\xi^2}\eta+\hat Y)\otimes (\frac{\varepsilon}{\xi^2}\eta+\hat Y)\Big)\cdot h_{yy}=\Big((\varepsilon(\frac{1}{\xi^2}+\frac{1}{|\eta|^2})\eta+\hat Y^{\perp})\otimes (\varepsilon(\frac{1}{\xi^2}+\frac{1}{|\eta|^2})\eta+\hat Y^{\perp})\Big)\cdot h_{yy}=0.$$
Since $(\hat Y^{\perp}\otimes\hat Y^{\perp})\cdot h_{yy}=0$, we have
\begin{equation}\label{varepsilon}
\varepsilon^2(\frac{1}{\xi^2}+\frac{1}{|\eta|^2})^2(\eta\otimes\eta)\cdot h_{yy}=-\varepsilon(\frac{1}{\xi^2}+\frac{1}{|\eta|^2})(\eta\otimes \hat Y^{\perp}+\hat Y^{\perp}\otimes \eta)\cdot h_{yy}.
\end{equation}
Notice that for fixed $\hat Y^{\perp}$ and $\varepsilon\neq 0$, $\hat Y=-\frac{\varepsilon\eta}{|\eta|^2}+\hat Y^{\perp}$ will also work for above equation. Thus both sides of \eqref{varepsilon} vanish, in particular $(\eta\otimes \eta)\cdot h_{yy}=0$ and $(\eta\otimes Y+Y\otimes \eta)\cdot h_{yy}$ for any $Y\in \eta^\perp$. Above argument means that $(\hat Y\otimes \hat Y)\cdot h_{yy}=0$ for all $\hat Y\in \mathbb S^{n-2}$. Taking into account the symmetricity of $h_{yy}$, it has to be zero.

Since $h_{yy}=0$, by \eqref{kernel of delta^s_F} if $\xi\neq 0$, we have $h_{xy}=0$ and $h_{xx}=0$. If $\xi=0$, then $\tilde S^2h_{xx}+\tilde S(h_{xy}\cdot \hat Y_1+\hat Y_2\cdot h_{xy})=0$, thus $\tilde S h_{xx}+h_{xy}\cdot \hat Y+\hat Y\cdot h_{xy}=0$ when $\tilde S\neq 0$ small, for any $\hat Y\in \eta^{\perp}\cap \mathbb S^{n-2}$. Take $\tilde S_i\neq 0$ with $\chi(\tilde S_i)>0$, $i=1,2$, then $(\tilde S_1-\tilde S_2)h_{xx}=0$ which implies that $h_{xx}=0$ and $h_{xy}\cdot \hat Y=0$ for all $\hat Y\in \eta^{\perp}\cap \mathbb S^{n-2}$. However, since $\eta\cdot h_{xy}=0$, we get $h_{xy}=0$. Thus $h=(h_{xx},h_{xy},h_{yy})=0$, i.e. $[h,\beta]=0$, which is a contradiction. This proves the lemma.
\end{proof}

\begin{Lemma}\label{B_F at finite}
There exists $F_0>0$, for any $F>F_0$, there is $\chi=\chi_F\in C^{\infty}_c(\mathbb R)$ such that $A_F$ is elliptic at finite points of $T_{sc}^*U$ when restricted on the kernel of the scattering principal symbol of $\boldsymbol{\delta}^s_F$.
\end{Lemma}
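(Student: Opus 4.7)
The plan is to adapt the strategy of Lemma \ref{A_F at finite} to the $B_F$ setting. I would first take the Gaussian profile $\chi(s)=e^{-s^2/(2F^{-1}\alpha)}$ and, in parallel with the $K_A$ computation, compute $\sigma_{sc}(B_F)(0,y,\xi,\eta)$ by (i) taking the $X$-Fourier transform of $K_B$, which produces a matrix of polynomials in $-(\xi+iF)|Y|$ multiplied by $e^{-\mu_\pm (\xi+iF)^2|Y|^2/2}$, (ii) using evenness in $(X,Y)$ to rewrite the polar $|Y|$-integral over $[0,\infty)$ as one over $\mathbb R$, and (iii) computing the resulting Gaussian $Y$-Fourier transform. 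By the shape of $B^\pm_{ij}$, the outcome should be, up to a nonzero scalar and lower-order boundary corrections,
\begin{equation*}
\sigma_{sc}(B_F)(0,y,\xi,\eta)\;\sim\;\frac{1}{\sqrt{\xi^2+F^2}}\int_{\mathbb S^{n-2}} v_{+}(\hat Y)\otimes v_{-}(\hat Y)^{*}\,e^{-|\hat Y\cdot\eta|^2/(2\mu(\xi^2+F^2))}\,d\hat Y,
\end{equation*}
where $v_{\pm}(\hat Y)$ is the column vector obtained from the formal symbol vector of Lemma \ref{B_F at infinity} by the substitution $\tilde S\mapsto -\frac{(\xi\pm iF)\hat Y\cdot\eta}{\xi^2+F^2}$.

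Next I would pair with a nonzero $[h,\beta]\in\ker\sigma_{sc}(\bdelta^s_F)$. By Lemma \ref{symbol 2}, this kernel is (schematically) characterized by
\begin{equation*}
(\xi-iF)h_{xx}+\tfrac12(\iota_\eta h_{xy}+\iota_\eta h_{yx})+\langle a,h_{yy}\rangle+\langle b,\beta_y\rangle=0,
\end{equation*}
\begin{equation*}
\tfrac12(\xi-iF)(h_{xy}+h_{yx})+\iota_\eta^s h_{yy}=0,\qquad (\xi-iF)\beta_x+\iota_\eta\beta_y=0,
\end{equation*}
where $a,b$ are the Lorentz-force contributions, independent of $F$. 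The quadratic form $(\sigma_{sc}(B_F)[h,\beta],[h,\beta])$ then becomes a positive multiple of $\int_{\mathbb S^{n-2}}|T(\hat Y)|^2\,e^{-|\hat Y\cdot\eta|^2/(2\mu(\xi^2+F^2))}\,d\hat Y$, with $T(\hat Y)$ the scalar obtained by contracting $v_{-}(\hat Y)$ with $[h,\beta]$. Since the Gaussian weight is strictly positive on $\mathbb S^{n-2}$, vanishing of the pairing forces $T(\hat Y)\equiv 0$ on all of $\mathbb S^{n-2}$, a polynomial identity in $\hat Y$.

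From this point I would run an algebraic argument parallel to Lemma \ref{B_F at infinity}, with two modifications. First, the Gaussian weight replaces the sharp support constraint of Lemma \ref{B_F at infinity}, forcing the identity on all of $\mathbb S^{n-2}$ rather than on $\eta^{\perp}\cap\mathbb S^{n-2}$, which streamlines the polynomial-vanishing step. Second, solving the gauge relations for $h_{xx}$, $h_{xy}+h_{yx}$ and $\beta_x$ in terms of $h_{yy}$ and $\beta_y$ introduces additional contributions from $\langle a,h_{yy}\rangle$ and $\langle b,\beta_y\rangle$ that are $O(F^{-1})$ relative to the leading $(\xi-iF)$-coefficients. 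Substituting into $T(\hat Y)=0$ and decomposing $\hat Y$ along and transversal to $\eta$ reduces the problem to injectivity of a finite-dimensional linear map on $(h_{yy},\beta_y)$ parametrized by $(\xi,\eta,F)$; in the leading $F\to\infty$ limit this map becomes block-diagonal, with blocks exactly those shown to be injective in Lemma \ref{B_F at infinity} and Lemma \ref{A_F at finite}. By homogeneity in $(\xi,\eta)$ and compactness of the unit sphere in $(\xi,\eta)$, injectivity persists uniformly for all $F\geq F_0$ provided $F_0$ is sufficiently large. A Schwartz-approximation of the Gaussian by $\chi_F\in C_c^\infty(\mathbb R)$, together with openness of ellipticity, then yields the required cutoff.

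The main obstacle, and the reason a threshold $F_0>0$ is required here (unlike in Lemma \ref{A_F at finite}), is precisely the Lorentz-force coupling $a,b$ in $\bdelta^s_F$ together with the $B_F$-induced coupling between $h$ and $\beta$: one cannot simply invoke the preceding two lemmas on the ``$h$-block'' and ``$\beta$-block'' separately, since $\ker\sigma_{sc}(\bdelta^s_F)$ is not a direct sum of the analogous pure-tensor and pure-one-form kernels. The perturbative argument outlined above relies on the cross-couplings being of lower order in $F$ than the dominant terms, which is exactly where the largeness of $F$ enters.
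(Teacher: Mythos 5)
Your overall strategy — Gaussian profile, Fourier transform computation, pairing with $[h,\beta]$ in $\ker\sigma_{sc}(\bdelta^s_F)$, parity/algebraic vanishing argument, then Schwartz approximation — matches the paper's approach. However, the claimed form of $\sigma_{sc}(B_F)$ contains a genuine error that undermines the subsequent algebraic argument.

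You assert that the symbol vector is obtained from the formal symbol vector $(\tilde S^2,\tilde S\hat Y_1,\tilde S\hat Y_2,\hat Y_1\otimes\hat Y_2,\tilde S,\hat Y)$ of Lemma \ref{B_F at infinity} by the substitution $\tilde S\mapsto\theta_1:=-\frac{(\xi-iF)\hat Y\cdot\eta}{\xi^2+F^2}$, i.e.\ that the $\tilde S^2$ slot becomes $\theta_1^2$. This is not correct, and it is not a ``lower-order boundary correction.'' The $2\alpha_\pm|Y|$ shift in $S_\pm+2\alpha_\pm|Y|$ from \eqref{lambda omega prime} enters the Schwartz kernel $K_B$ and survives the Gaussian Fourier transforms at leading order: the $\tilde S^2$ slot actually becomes
$$\theta_2=\theta_1^2+2i\alpha\,\frac{\xi-iF}{\xi^2+F^2},$$
with $\alpha$ evaluated at the boundary. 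This $\alpha$-dependence is not a small perturbation; it is what distinguishes the magnetic symbol from the geodesic one.

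This matters because your subsequent decoupling step would go through for the wrong reason. With $\theta_2=\theta_1^2$, since $\theta_1$ is odd in $\hat Y$, the polynomial $T(\hat Y)$ splits by parity into an even part involving only $h$ and an odd part involving only $\beta$ — exactly the geodesic-case decoupling, and then you could essentially invoke Lemma \ref{B_F at infinity}'s $h$-argument and Lemma \ref{A_F at finite}'s $\beta$-argument separately with no need for a threshold $F_0$. The actual magnetic structure is different: for a magnetic geodesic $\alpha(\hat Y)=\alpha^+(\hat Y)+\alpha^-(\hat Y)$ where $\alpha^+$ is a positive-definite quadratic form (even) and $\alpha^-$ is a $1$-form arising from the Lorentz force (odd). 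Using $\theta_2(-\hat Y)=\theta_1^2(\hat Y)+2i(\alpha^+(\hat Y)-\alpha^-(\hat Y))\frac{\xi-iF}{\xi^2+F^2}$, the parity decomposition of $T\equiv 0$ places the term $2i(\alpha^-\!\cdot\hat Y)\frac{\xi-iF}{\xi^2+F^2}\,h_{xx}$ into the \emph{odd} equation together with $\theta_1\beta_x+\hat Y\cdot\beta_y$. This is the genuine cross-coupling of $h_{xx}$ into the $\beta$-block (in addition to the $a,b$ terms in the gauge), and it — not merely the gauge coupling you identify — is the reason a threshold $F_0$ is needed and why the paper runs a normalized-sequence/compactness argument on $(h_{yy},\beta_y)$ rather than treating the blocks independently. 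Without the $\alpha^-$ term in $\theta_2$, your outline proves a statement about the geodesic-like symbol, not the magnetic one; you would need to first establish the correct formula for $\theta_2$, then rework the parity split to include $\alpha^\pm$, and only then the large-$F$ decoupling you envision becomes the right mechanism.
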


\begin{proof}
If $\chi$ is a Gaussian function, i.e. $\chi(s)=e^{-s^2/2F^{-1}\alpha}$, by a computation similar to that of Lemma \ref{A_F at finite} we get that the scattering principal symbol of $B_F$ is a non-zero multiple of
\begin{equation*}
\int_{\mathbb S^{n-2}}\frac{1}{\sqrt{\xi^2+F^2}}\begin{pmatrix} \bar\theta_2\\ \hat Y_1\bar\theta_1 \\ \hat Y_2\bar\theta_1 \\ \hat Y_1\hat Y_2 \\ \bar\theta_1 \\ \hat Y \end{pmatrix}\begin{pmatrix}\theta_2 & \theta_1\hat Y_1 & \theta_1 \hat Y_2 & \hat Y_1\hat Y_2 & \theta_1 & \hat Y\end{pmatrix}e^{-|\hat Y\cdot \eta|^2/2F^{-1}\alpha(\xi^2+F^2)}\,d\hat Y,
\end{equation*}
where $\theta_1=-\frac{\xi-iF}{\xi^2+F^2}(\hat Y\cdot \eta)$ and $\theta_2=\frac{(\xi-iF)^2}{(\xi^2+F^2)^2}(\hat Y\cdot \eta)^2+2i\alpha \frac{\xi-iF}{\xi^2+F^2}=\theta_1^2+2i\alpha  \frac{\xi-iF}{\xi^2+F^2}$.

Given a nonzero pair $[h,\beta]$ in the kernel of the scattering principal symbol of $\bdelta^s_F$, by Lemma \ref{symbol 2}, 
\begin{equation}\label{kernel of delta^s_F 2}
(\xi-iF)h_{xx}+\eta\cdot h_{xy}+a\cdot h_{yy}+b\cdot \beta_y=0,\quad (\xi-iF)h_{xy}+\frac{1}{2}(\eta_1+\eta_2)\cdot h_{yy}=0
\end{equation}
and 
\begin{equation}\label{kernel of delta^s_F 3}
(\xi-iF)\beta_x+\eta\cdot \beta_y=0.
\end{equation}
Then
\begin{align*}
(\sigma_{sc} &(B_F)[h,\beta],[h,\beta])=\frac{c}{\sqrt{\xi^2+F^2}}\times \\
& \int_{\mathbb S^{n-2}}|\theta_2h_{xx}+2\theta_1\hat Y\cdot h_{xy}+(\hat Y\otimes \hat Y)\cdot h_{yy}+\theta_1\beta_x+\hat Y\cdot \beta_y|^2 e^{-|\hat Y\cdot \eta|^2/2F^{-1}\alpha(\xi^2+F^2)}\,d\hat Y.
\end{align*}
If the lemma is not true, then for any $N>0$, there is $F>N$ such that above integral vanishes for some nonzero $[h,\beta]$ in the kernel of $\sigma_{sc}(\bdelta^s_F)$, we get that $\theta_2h_{xx}+2\theta_1\hat Y\cdot h_{xy}+(\hat Y\otimes \hat Y)\cdot h_{yy}+\theta_1\beta_x+\hat Y\cdot \beta_y=0$ for all $\hat Y\in \mathbb S^{n-2}$. Note that $\theta_1(-\hat Y)=-\theta_1(\hat Y)$. On the other hand, by \eqref{-alpha} it is not difficult to see that for magnetic geodesics $\alpha(\hat Y)=d^2x/dt^2|_{t=0}=\alpha^+(\hat Y)+\alpha^-(\hat Y)$ with $\alpha^+$ a positive definite quadratic form (similar to the geodesic case) and $\alpha^-$ a $1$-form (related to $E$). Thus $\theta_2(-\hat Y)=\theta_1^2(\hat Y)+2i(\alpha^+(\hat Y)-\alpha^-(\hat Y))\frac{\xi-iF}{\xi^2+F^2}$, and
\begin{equation}\label{decouple 2}
\begin{split}
& \Big(\theta_1^2(\hat Y)+2i\alpha^+(\hat Y)\frac{\xi-iF}{\xi^2+F^2}\Big)h_{xx}+2\theta_1(\hat Y)\hat Y\cdot h_{xy}+(\hat Y\otimes \hat Y)\cdot h_{yy}=0,\\
& 2i\alpha^-\cdot \hat Y\frac{\xi-iF}{\xi^2+F^2}h_{xx}+\theta_1(\hat Y)\beta_x+\hat Y\cdot \beta_y=0
\end{split}
\end{equation}
for all $\hat Y$. In other words, there exist $\{F_k\}_{k=1}^{\infty}$, $F_k\to +\infty$ as $k\to \infty$, and $\{[h^k,\beta^k]\}_{k=1}^{\infty}$, $[h^k,\beta^k]$ in the kernel of $\sigma_{sc}(\bdelta^s_{F_k})$ and nonzero, such that \eqref{decouple 2} holds for each pair $(F_k,[h^k,\beta^k])$. 

First we claim that for large enough $k$, $h^k_{yy}\neq 0$. If not, then there exists a subsequence $\{F_{n_k}, [h^{n_k},\beta^{n_k}]\}$ such that $h^{n_k}_{yy}=0$ for all $n_k$. Then by \eqref{kernel of delta^s_F 2} $h^{n_k}_{xy}=0$ and $h^{n_k}_{xx}=-b\cdot\beta^{n_k}_y/(\xi-iF_{n_k})$. So by \eqref{kernel of delta^s_F 3} and the second equation of \eqref{decouple 2}, 
$$\Big(-2i\frac{b\cdot\beta_y^{n_k}}{\xi^2+F_{n_k}^2}\alpha^-+\frac{\eta\cdot\beta^{n_k}_y}{\xi^2+F_{n_k}^2}\eta+\beta^{n_k}_y\Big)\cdot \hat Y=0$$
for all $\hat Y\in \mathbb S^{n-2}$, i.e. 
\begin{equation}\label{beta_y}
-2i\frac{b\cdot\beta_y^{n_k}}{\xi^2+F_{n_k}^2}\alpha^-+\frac{\eta\cdot\beta^{n_k}_y}{\xi^2+F_{n_k}^2}\eta+\beta^{n_k}_y=0.
\end{equation}
If $\beta_y^{n_k}=0$, then by \eqref{kernel of delta^s_F 3} $\beta_x^{n_k}=0$ and $h_{xx}^{n_k}=0$, i.e. $[h^{n_k},\beta^{n_k}]=0$, which is a contradiction. Thus we can assume that $\beta_y^{n_k}$ has unit norm for all $n_k$ (notice that at a fixed point the geometry is trivial). Let $F_{n_k}\to +\infty$, then by \eqref{beta_y} $\beta_y^{n_k}\to 0$, which is again a contradiction.


Now we can assume that $h_{yy}^k\neq 0$ for all $k$. 
By \eqref{kernel of delta^s_F 2} and \eqref{kernel of delta^s_F 3}, for any $k$
\begin{equation*}
\begin{split}
& h^k_{xy}=-\frac{\eta_1+\eta_2}{2(\xi-iF_k)}\cdot h^k_{yy}, \\
& h^k_{xx}=-\frac{\eta\cdot h^k_{xy}+a\cdot h^k_{yy}+b\cdot \beta^k_y}{\xi-iF_k}=\frac{\eta\otimes \eta-(\xi-iF_k)a}{(\xi-iF_k)^2}\cdot h^k_{yy}-\frac{b}{\xi-iF_k}\cdot \beta^k_y,\\
& \beta_x^k=-\frac{\eta\cdot \beta_y^k}{\xi-iF_k}.
\end{split}
\end{equation*}
Plugging above equalities into \eqref{decouple 2} to get
\begin{equation}\label{zero for all k 1}
\begin{split}
\Big(\frac{(\hat Y\cdot \eta)^2+2i\alpha^+(\xi+iF_k)}{(\xi^2+F_k^2)^2} & (\eta\otimes \eta-(\xi-iF_k)a)+\frac{\hat Y\cdot\eta}{\xi^2+F_k^2}(\eta\otimes\hat Y+\hat Y\otimes\eta)+\hat Y\otimes\hat Y\Big)\cdot h^k_{yy}\\
& -\frac{\xi-iF_k}{(\xi^2+F_k^2)^2}\Big((\hat Y\cdot \eta)^2+2i\alpha^+(\xi+iF_k)\Big)b\cdot \beta^k_y=0,
\end{split}
\end{equation}
and
\begin{equation}\label{zero for all k 2}
2i(\alpha^-\cdot \hat Y)\frac{\eta\otimes \eta-(\xi-iF_k)a}{(\xi^2+F_k^2)(\xi-iF_k)}\cdot h^k_{yy}+\Big(\hat Y+\frac{\hat Y\cdot \eta}{\xi^2+F_k^2}\eta-\frac{2i(\alpha^-\cdot \hat Y)}{\xi^2+F_k^2}b\Big)\cdot\beta^k_y=0.
\end{equation}

If there is a subsequence of $\{\beta_y^{n_k}\}_{n_k\to \infty}$ such that $\beta_y^{n_k}=0$ for all $n_k$, since $h_{yy}^k\neq 0$, we may assume that $h_{yy}^{n_k}$ has unit norm for all $n_k$. Thus there exists further a subsequence $\{h_{yy}^{n'_k}\}_{n'_k\to \infty}$ of $\{h_{yy}^{n_k}\}$ and $h_{yy}^{\infty}$ satisfying $h_{yy}^{n'_k}\to h_{yy}^{\infty}$, $F_{n'_k}\to +\infty$ as $n'_k\to \infty$. As $(\xi,\eta)$ is a finite point, we take the limit of \eqref{zero for all k 1} as $n'_k\to \infty$ to get that 
$$(\hat Y\otimes \hat Y)\cdot h^{\infty}_{yy}=0, \quad\forall\, \hat Y\in \mathbb S^{n-2}.$$
Since $h^{\infty}_{yy}$ is a symmetric tensor, above equality forces it to be zero. However, since $h^{n_k}_{yy}$ has unit norm, the limit $h^{\infty}_{yy}$ can not be zero, we reach a contradiction. 

So we can assume that $h_{yy}^k\neq 0$ and $\beta_y^k\neq 0$ for any $k$. Let $c_k=\max\{\|h^k_{yy}\|, \|\beta^k_y\|\}>0$, consider the sequence $\{[h^k/c_k,\beta^k/c_k]\}$, we still denote the new sequence by $\{[h^k,\beta^k]\}$, thus $\|h^k_{yy}\|\leq 1$ and $\|\beta^k_y\|\leq 1$. Then there exists a subsequence $\{(h^{n_k},\beta^{n_k})\}_{n_k\to\infty}$ such that $h^{n_k}_{yy}\to h^{\infty}_{yy}$, $\beta^{n_k}_y\to \beta^{\infty}_{y}$, $F_{n_k}\to +\infty$ as $n_k\to \infty$. Now we take the limits of \eqref{zero for all k 1} and \eqref{zero for all k 2} with respect to the subsequence as $n_k\to \infty$ to get that 
$$(\hat Y\otimes\hat Y)\cdot h^{\infty}_{yy}=0,\quad \hat Y\cdot \beta_y^{\infty}=0, \quad \forall \hat Y\in \mathbb S^{n-2}.$$
Again this implies that $h^{\infty}_{yy}=0$ and $\beta^{\infty}_y=0$. However for each $n_k$, either $\|h^{n_k}_{yy}\|=1$ or $\|\beta^{n_k}_y\|=1$, so $h^{\infty}_{yy}$ and $\beta^{\infty}_y$ can not both vanish. This is a contradiction too, thus our assumption for the contradiction argument is not true, i.e. there is some $F_0>0$ such that the lemma holds for Gaussian like $\chi$. Then we apply an approximation argument to complete the proof.
\end{proof}

\noindent {\it Remark:} The algebraic argument of the proof of Lemma \ref{B_F at finite} is different from the one of \cite{SUV14} as the magnetic case is more complicated than the geodesic case due to the coupling of tensors of different orders. In particular, $\alpha$ is no longer an even function of $\hat Y$ as in the geodesic case, which is the reason why we consider $h_{yy}$ and $\beta_y$ together in the main argument. On the other hand, our idea might work for the tensor tomography problem along general smooth curves, since generally one can decompose $\alpha$ into the even and odd parts with $\alpha=\alpha^++\alpha^-$, where $\alpha^+(\hat Y)=(\alpha(\hat Y)+\alpha(-\hat Y))/2$ and $\alpha^-(\hat Y)=(\alpha(\hat Y)-\alpha(-\hat Y))/2$.

\medskip

Similar to Proposition \ref{A_F elliptic}, we have the following result for $B_F$.

\begin{Proposition}\label{B_F elliptic}
There exists $F_0>0$ such that for any $F>F_0$, given $\Omega$ a neighborhood of $O$ in $U$, there exist $\chi\in C^{\infty}_{c}(\mathbb R)$ and $N\in \Psi^{-3,0}_{sc}(U; T^*_{sc}U\times\underline{U},T^*_{sc}U\times\underline{U})$ such that 
$B_F+{\bf d}^s_F N \boldsymbol{\delta}^s_F\in \Psi_{sc}^{-1,0}(U;Sym^2T^*_{sc}U\times T^*_{sc}U,Sym^2T^*_{sc}U\times T^*_{sc}U)$ is elliptic in $\Omega$.
\end{Proposition}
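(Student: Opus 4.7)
The plan is to combine Lemmas \ref{B_F at infinity} and \ref{B_F at finite} to obtain ellipticity of $\sigma(B_F)$ on the kernel of $\sigma(\boldsymbol\delta^s_F)$, and then to add a gauge-fixing term ${\bf d}^s_F N\boldsymbol\delta^s_F$ whose principal symbol is elliptic on the $g_{sc}$-orthogonal complement of this kernel. The full principal symbol of the sum is then invertible on the whole fiber $Sym^2T^*_{sc}U\times T^*_{sc}U$ at every point over $\Omega$, giving ellipticity. This is exactly the strategy used for Proposition \ref{A_F elliptic} and mirrors the corresponding construction in \cite{SUV14}.

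First I would take $F_0>0$ from Lemma \ref{B_F at finite} and fix any $F>F_0$. That lemma supplies an even cutoff $\chi=\chi_F\in C^\infty_c(\mathbb R)$, obtained by Schwartz-approximating a Gaussian, such that $\sigma_{sc}(B_F)$ is elliptic on $\ker\sigma_{sc}(\boldsymbol\delta^s_F)$ at every finite point of $T^*_{sc}U$ over $\Omega$. The fiber-infinity argument in Lemma \ref{B_F at infinity} only requires $\chi$ to be even and positive on a neighborhood of $0$, so the same $\chi$ also makes $\sigma_p(B_F)$ elliptic on $\ker\sigma_p(\boldsymbol\delta^s_F)$ at fiber infinity. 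By compactness of $\overline{\Omega}$ the resulting ellipticity is uniform.

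Next I would construct $N$ by prescribing its principal symbol. By Lemma \ref{symbol 2}, at each nonzero $\zeta\in T^*_{sc}U$ over $\Omega$ the composition $\sigma(\boldsymbol\delta^s_F)\sigma({\bf d}^s_F)$ is an invertible endomorphism of the fiber $T^*_{sc}U\times\underline U$, both at fiber infinity and on the scattering face. I would choose $N\in\Psi^{-3,0}_{sc}(U;T^*_{sc}U\times\underline U,T^*_{sc}U\times\underline U)$ whose principal symbol equals $\bigl(\sigma(\boldsymbol\delta^s_F)\sigma({\bf d}^s_F)\bigr)^{-1}$, smoothly cut off near $\zeta=0$, so that the order-matching $1+(-3)+1=-1$ places ${\bf d}^s_F N\boldsymbol\delta^s_F$ in $\Psi^{-1,0}_{sc}$. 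The principal symbol of ${\bf d}^s_F N\boldsymbol\delta^s_F$ is then the $g_{sc}$-orthogonal projection onto $\mathrm{Range}(\sigma({\bf d}^s_F))$, which is exactly the $g_{sc}$-orthogonal complement of $\ker\sigma(\boldsymbol\delta^s_F)$.

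In the decomposition $Sym^2T^*_{sc}U\times T^*_{sc}U=\ker\sigma(\boldsymbol\delta^s_F)\oplus\mathrm{Range}(\sigma({\bf d}^s_F))$, the principal symbol of $B_F+{\bf d}^s_F N\boldsymbol\delta^s_F$ is block triangular with invertible diagonal blocks (the first being $\sigma(B_F)|_{\ker\sigma(\boldsymbol\delta^s_F)}$, the second being the identity), hence invertible at every point over $\Omega$. The main obstacle is ensuring that the Schwartz approximation of the Gaussian preserves ellipticity simultaneously at finite points and at fiber infinity over all of $\Omega$; the algebraic coupling between $h_{yy}$ and $\beta_y$ caused by the odd part $\alpha^-$ of $\alpha$ and by the Lorentz-force term $b$ in $\sigma({\bf d}^s_F)$ is already absorbed into the proofs of Lemmas \ref{symbol 2}, \ref{B_F at infinity}, and \ref{B_F at finite}, so the assembly above goes through exactly as in the geodesic case of \cite{SUV14}.
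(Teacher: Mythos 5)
Your proposal is essentially correct and follows the same route the paper (and \cite{SUV14}) intends: combine Lemmas~\ref{B_F at infinity} and~\ref{B_F at finite} for ellipticity on $\ker\sigma(\boldsymbol\delta^s_F)$, then add a gauge term whose symbol is the orthogonal projection onto $\mathrm{Range}\,\sigma(\bold d^s_F)$. One small inaccuracy: the symbol of $B_F+\bold d^s_F N\boldsymbol\delta^s_F$ is generally \emph{not} block triangular in the splitting $\ker\sigma(\boldsymbol\delta^s_F)\oplus\mathrm{Range}\,\sigma(\bold d^s_F)$, because $\sigma(B_F)$ (being nonnegative Hermitian but not block diagonal) has nontrivial cross terms; the correct and standard closing step is positivity rather than triangularity. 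Since $\sigma(B_F)\ge 0$ (it is an integral of rank-one outer products against the nonnegative weight $\chi$) and the projection is $\ge 0$, for any $v$ with $\langle(\sigma(B_F)+\Pi)v,v\rangle=0$ one gets $\Pi v=0$ (so $v\in\ker\sigma(\boldsymbol\delta^s_F)$) and $\langle\sigma(B_F)v,v\rangle=0$, whence $v=0$ by the lemmas; that gives pointwise invertibility of the symbol. Also, the invertibility of $\sigma(\boldsymbol\delta^s_F)\sigma(\bold d^s_F)$ needed to define $N$ is not immediate from Lemma~\ref{symbol 2} alone but requires $F$ large, as verified in the $\Delta^s_F$ computation of Section~4.2; this is consistent with your choice $F>F_0$ provided $F_0$ is enlarged if necessary.
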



\section{Proofs of the main local results}

Now we rephrase the invertibility results of Section 3 in a gauge free way. This part is similar to \cite[Section 4]{SUV14}, the key ingredient is the local invertibility of some Witten-type Dirichlet Laplacian.

\subsection{Proof of Theorem \ref{local 1-form}}

Note that if the `solenoidal Witten Laplacian' $\Delta_{F}=\bdelta_{F}\bold d_{F}$ is invertible with the Dirichlet boundary condition, we can decompose $f_F:=[\beta,\varphi]_F=e^{-F/x}W^{-1}[\beta,\varphi]$ into
$$f_F=\mathcal S_{F}f_F+\mathcal P_{F}f_F,$$
where $\mathcal P_{F}=\bold d_{F}\Delta_{F}^{-1}\bdelta_{F}$. Thus we denote $\mathcal P_{F}f_F$ by $\bold d_{F}p_F=W^{-1}e^{-F/x}\bold dp$ with $p|_{\p O\cap \p M}=0$, then given $f=[\beta, \varphi]$
$$If=I(f-\bold dp)=I(e^{F/x}W(f_F-\bold d_{F}p_F))=I(e^{F/x}W\mathcal S_{F}f_F).$$
Notice that $\bdelta_{F}(\mathcal S_{F}f_F)=0$, by Proposition \ref{A_F elliptic} in $O$, $\mathcal S_{F}f_F$ or equivalently $e^{F/x}W\mathcal S_{F}f_F=f-\bold dp$ can be stably determined by $If=I(e^{F/x}W\mathcal S_{F}f_F)$, see \cite[Theorem 4.15]{SUV14}, also \cite[Sect. 3.7]{UV15} for the function case. Generally the stability estimate by ellipticity has an error term, however for the local problem the error term is relatively small and can be absorbed to produce the full invertibility, see \cite[Sect. 2]{UV15}. This proves Theorem \ref{local 1-form}. 
So one just needs to show that $\Delta_{F}$ is invertible with the Dirichlet boundary condition, however this is immediate from the argument of \cite[Section 4]{SUV14}. Note that by the definition, $\Delta_F$ is 
the same as the Witten Laplacian of functions in \cite{SUV14}.

\subsection{Proof of Theorem \ref{local 2-tensor}}

Similar to the argument of Section 4.1, if the Witten Laplacian $\Delta_F^s=\bdelta^s_F\bold d^s_F$ is invertible with the Dirichlet boundary condition, let $f=[h,\beta]$, then by Proposition \ref{B_F elliptic} there are some 1-form $u$ and function $p$ with $u|_{\p O\cap \p M}=0$, $p|_{\p O\cap \p M}=0$ such that $f-\bold d^s[u,p]$ can be stably determined by $If$. Notice that by Lemma \ref{symbol 2}, the principal symbol of $\Delta^s_{F}$ is
\begin{equation*}
\begin{pmatrix}\<\xi\>^2+\frac{1}{2}|\eta|^2 & \frac{1}{2}(\xi+iF)\iota_{\eta} & 0 \\ \frac{1}{2}(\xi-iF)\eta\otimes & \frac{1}{2}\<\xi\>^2+|\eta|^2 & 0 \\ 0 & 0 & \<\xi\>^2+|\eta|^2 \end{pmatrix}+\begin{pmatrix} \<a,\cdot\>a+\<b,\cdot\>b & \<a,\cdot\>\eta\otimes_s & \<b,\cdot\>\eta\otimes \\ \iota_{\eta}^s a & 0 & 0 \\ \iota_\eta b & 0 & 0 \end{pmatrix},
\end{equation*}
where $\<\xi\>=\sqrt{\xi^2+F^2}$. It is easy to check that the first part of the symbol has a lower bound $O(\xi^2+F^2+|\eta|^2)$, by taking $F$ large enough, it can absorb the second part of the symbol which is independent of $F$. Thus $\Delta^s_F$ is elliptic for large $F$. Moreover, let $\nabla_F=e^{-F/x}\nabla e^{F/x}$ with $\nabla$ being the gradient with respect to the scattering metric $g_{sc}$, we define $\nabla_F^s[\beta,\varphi]:=[\nabla_F\beta, \sqrt{2}\nabla_F\varphi]$, which has the following principal symbol
\begin{equation*}
\begin{pmatrix} \xi+iF & 0 & 0 \\ \eta\otimes & 0 & 0 \\ 0 & \xi+iF & 0\\ 0 & \eta\otimes & 0 \\ 0 & 0 & \sqrt{2}(\xi+iF) \\ 0 & 0 & \sqrt{2}\eta\otimes \end{pmatrix}. 
\end{equation*}
So the principal symbol of $(\nabla^s_F)^*$, the adjoint under the scattering metric $g_{sc}$, is 
\begin{equation*}
\begin{pmatrix} \xi-iF & \iota_\eta & 0 & 0 & 0 & 0 \\ 0 & 0 & \xi-iF & \iota^s_\eta & 0 & 0 \\ 0 & 0 & 0 & 0 & \sqrt{2}(\xi-iF) & \sqrt{2}\iota_\eta \end{pmatrix}
\end{equation*}
and the principal symbol of $(\nabla^s_F)^*\nabla^s_F$ is 
\begin{equation*}
\begin{pmatrix} \<\xi\>^2+|\eta|^2 & 0 & 0 \\ 0 & \<\xi\>^2+|\eta|^2 & 0 \\ 0 & 0 & 2(\<\xi\>^2+|\eta|^2) \end{pmatrix}.
\end{equation*}
On the other hand, applying Lemma \ref{symbol 1} again, we get the principal symbol of $\bold d_F\bdelta_F$
\begin{equation*}
\begin{pmatrix} \<\xi\>^2 & (\xi+iF)\iota_\eta & 0 \\ (\xi-iF)\eta\otimes & |\eta|^2 & 0 \\ 0 & 0 & 0 \end{pmatrix}.
\end{equation*}
Therefore, $\Delta^s_F=\frac{1}{2}(\nabla_F^s)^*\nabla^s_F+\frac{1}{2}\bold d_F\bdelta_F+R+\tilde R$ with $R\in \Diff^1_{sc}(T^*_{sc}U\times \underline U, T^*_{sc}U\times \underline U)$ given by the second part of the principal symbol of $\Delta^s_F$ and $\tilde R\in x \Diff^1_{sc}(T^*_{sc}U\times \underline U, T^*_{sc}U\times \underline U)$ containing all the lower order terms. We have proved \cite[Lemma 4.1]{SUV14} under our settings, now Theorem \ref{local 2-tensor} follows by an argument similar to that of \cite[Section 4]{SUV14}.


\section{Proof of the global result}

We prove the part (a) of Theorem \ref{global result} based on the local result Theorem \ref{local 1-form} in this section, part (b) follows in a similar way by applying Theorem \ref{local 2-tensor}. A similar argument for the geodesic ray transform can be found in \cite{PSUZ16}. We first prove the following weaker version of Theorem \ref{global result} up to a set of empty interior. 
We define $\Sigma_t:=\tau^{-1}(t)$, $M_t:=M\setminus \{\tau\leq t\}$ and $\Omega_t:=\p M\setminus \{\tau\leq t\}$. 

\begin{Lemma}\label{global up to M_a}
Under the assumptions of Theorem \ref{global result}, there exists $v\in C^{\infty}(M_a)$ with $v|_{\Omega_a}=0$ such that $f=\bold dv$ in $M_a$. 
\end{Lemma}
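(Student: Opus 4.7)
The plan is a layer-stripping argument built on Theorem~\ref{local 1-form}, in the spirit of \cite{UV15, SUV14, PSUZ16}. Define
\[
S := \{\, t \in [a, b] : \exists\, v_t \in C^{\infty}(M_t),\ v_t|_{\Omega_t} = 0,\ \bold d v_t = f \text{ in } M_t\,\},
\]
and set $t^* := \inf S$. Since $M \subset \{\tau \leq b\}$ forces $M_b = \emptyset$, we have $b \in S$ and $t^* \leq b$. The lemma reduces to showing $t^* = a$.

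First I would verify closure, so that $t^* \in S$. Given a decreasing sequence $t_n \downarrow t^*$ in $S$, for $m \geq n$ the difference $v_{t_n} - v_{t_m}$ is $\bold d$-closed on $M_{t_m}$, hence locally constant, and it vanishes on $\Omega_{t_n} \cap M_{t_m}$. Since every connected component of $M_{t_m}$ meets $\Omega_{t_m}$ (a consequence of the foliation being transverse to $\partial M$ combined with the non-trapping assumption on $\{\tau \leq a\}$), the difference is identically zero. The $v_{t_n}$ thus patch to a smooth $v_{t^*}$ on $M_{t^*} = \bigcup_n M_{t_n}$ satisfying the required conditions, so $t^* \in S$.

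The crucial step is to show $t^* = a$. Suppose $t^* > a$ for a contradiction. At each $p \in \Sigma_{t^*}$, the hypersurface $\Sigma_{t^*}$ is strictly magnetic convex viewed as the boundary of the manifold $\{\tau \leq t^*\}$, so Theorem~\ref{local 1-form} applied to $If \equiv 0$ yields a neighborhood $O_p$ and a local potential $\tilde v_p \in H^1_{loc}(O_p)$ with $\tilde v_p|_{O_p \cap \Sigma_{t^*}} = 0$ and $\bold d \tilde v_p = f$ on $O_p$. Since $f$ is smooth, so is $\tilde v_p$ (and in particular the function component of $f$ vanishes on $O_p$). On each connected piece of $O_p \cap M_{t^*}$ the difference $v_{t^*} - \tilde v_p$ is constant; subtracting this constant from $\tilde v_p$ aligns it with $v_{t^*}$ on the old side. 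By compactness, finitely many such $O_p$, together with additional local patches supplied by Theorem~\ref{local 1-form} at points of $\partial M \cap \{t^* - \epsilon < \tau \leq t^*\}$ (available because $\partial M$ is itself strictly magnetic convex), cover a slab $\{t^* - \epsilon < \tau \leq t^*\}$ for some $\epsilon > 0$. Gluing these pieces to $v_{t^*}$ consistently produces $v_{t^* - \epsilon} \in C^{\infty}(M_{t^* - \epsilon})$ with $\bold d v_{t^* - \epsilon} = f$ and $v_{t^* - \epsilon}|_{\Omega_{t^* - \epsilon}} = 0$, contradicting the definition of $t^*$.

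The main obstacle is this gluing step: assembling the local potentials into a single smooth function. The freedom is one additive constant per connected overlap, which is fixed by the prescribed $\tilde v_p = 0$ on $\Sigma_{t^*} \cap O_p$, matched with the trace of $v_{t^*}$ along $\Sigma_{t^*}$ on the old side, and the boundary condition $v = 0$ on the new portion of $\Omega$. Connectivity back to $\Omega$ through the transverse flow of $\tau$, made possible by the foliation condition and the non-trapping assumption on $\{\tau \leq a\}$, ensures every such constant is pinned to zero.
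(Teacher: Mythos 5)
Your overall skeleton — define a critical level $t^*$, show the infimum is attained, then contradict minimality by stepping past $\Sigma_{t^*}$ — matches the paper. But the stepping-past argument has a genuine gap. You apply Theorem~\ref{local 1-form} directly to $f$ at interior points $p\in\Sigma_{t^*}\cap M^{int}$, invoking ``$If\equiv 0$'' as the hypothesis. What the theorem actually requires is that $If$ vanish over the \emph{$O$-local} magnetic geodesics whose endpoints lie on the artificial boundary $\Sigma_{t^*}$, on the convex side $\{\tau\leq t^*\}$. These are segments of full magnetic geodesics in $M$. The full-geodesic integral is zero by hypothesis, but the local-segment integral is not: decomposing the full geodesic and using $f=\mathbf{d}v_{t^*}$ on $M_{t^*}$ together with $v_{t^*}|_{\Omega_{t^*}}=0$, one finds that the integral over the local segment with endpoints $z_1,z_2\in\Sigma_{t^*}$ equals $v_{t^*}(z_2)-v_{t^*}(z_1)$, which is generically nonzero since $v_{t^*}$ is not assumed to vanish on the interior hypersurface $\Sigma_{t^*}$. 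So the local data is not zero, and Theorem~\ref{local 1-form} cannot be invoked in the form you use it.

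The paper resolves precisely this issue with a cut-off trick that your proposal omits. One first extends the potential near $\partial M\cap\Sigma_\sigma$ to get $u$ on $U=M_\sigma\cup O$, then takes a cut-off $\phi$ with $\operatorname{supp}\phi\subset M_\sigma$ and $\phi\equiv 1$ near $\overline{M_{\sigma'}}$ for a slightly larger $\sigma'>\sigma$, and replaces $f$ by $\tilde f=f-\mathbf{d}(\phi u)$. Since $\phi u$ vanishes on $\partial M$, $I\tilde f=0$ over full geodesics; and since $\tilde f\equiv 0$ on $M_{\sigma'}$, the local ray transform of $\tilde f$ over $O$-local geodesics with endpoints on $\Sigma_{\sigma'}$ now coincides with the full ray transform and is genuinely zero, making Theorem~\ref{local 1-form} applicable at interior points of $\Sigma_{\sigma'}$. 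This reduction also repairs your gluing step: you propose to subtract the locally constant difference $v_{t^*}-\tilde v_p$ from $\tilde v_p$, but doing so destroys the boundary condition $\tilde v_p|_{\Sigma_{t^*}}=0$, and indeed the two conditions are incompatible unless $v_{t^*}$ already vanishes on $\Sigma_{t^*}$. With the cut-off, the new local potential $\tilde v$ satisfies $\tilde v|_{\Sigma_{\sigma'}}=0$ and $(1-\phi)u|_{\Sigma_{\sigma'}}=0$ simultaneously, so the pieces patch consistently. Without this step the layer-stripping argument does not close.
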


\begin{proof}
Let 
\begin{align*}
\sigma:=\inf\{t\leq b: &\; \exists\,
v\in C^{\infty}(M_t), \,\mbox{such\,that} \,v|_{\Omega_t}=0,\,\mbox{and}\, f=\bold dv\,\mbox{in}\, M_t\}.
\end{align*}
We claim that $\sigma\leq a$ and we will argue by contradiction. 

First we show that $\sigma<b$. It is not difficult to see that $\Sigma_b$ is a compact subset of $\p M$ (in fact, if $\Sigma_b$ contains interior points, $\{\tau\leq b\}$ can not cover $M$). Since $\p M$ is strictly magnetic convex, by Theorem \ref{local 1-form}, for each $p\in \Sigma_b$, there is a neighborhood $O_p\subset M$ of $p$ and $v_p\in C^{\infty}(O_p)$ with $v_p|_{O_p\cap \p M}=0$ such that $f=\bold dv_p$ in $O_p$. If $O_p\cap O_q\neq \emptyset$ for some $p,q\in \Sigma_b$, we have
$$\bold d(v_p-v_q)|_{O_p\cap O_q}=0, \quad v_p-v_q|_{O_p\cap O_q\cap \p M}=0.$$
This implies that $v_p=v_q$ in $O_p\cap O_q$. Since $\tau^{-1}(b)$ is compact, there exist $t_0<b$ and $v$ smooth in $M_{t_0}$ such that $f=dv$ in $M_{t_0}$, in particular $v=v_p$ in $M_{t_0}\cap O_p$. Thus $\sigma\leq t_0<b$.

Indeed the infimum in the definition of $\sigma$ is a minimum. Let $\{t_j\}_{j=1}^{\infty}\subset (\sigma,b]$ be a strictly decreasing sequence with $t_j\to \sigma$ as $j\to \infty$. For each $j$, there is $v_j$ satisfying $f=\bold dv_j$ in $M_{t_j}$ and $v_j|_{\Omega_{t_j}}=0$. Since $\Sigma_t\cap M^{int}$ is strictly magnetic convex for any $t>a$, one can easily show that given arbitrary $k>0$, $v_{k}=v_{\ell}$ on $M_{t_k}$ for any $\ell>k$. This implies that the set $\{v_{j}\}$ defines a smooth function $v_{\sigma}$ in $M_\sigma$ with $v_\sigma |_{M_{t_j}\setminus M_{t_{j-1}}}=v_j$, $f=\bold dv_{\sigma}$ in $M_\sigma$ and $v_{\sigma}|_{\Omega_\sigma}=0$, i.e. $\sigma$ is a minimum.

Assume that $\sigma>a$ and consider the level set $\Sigma_\sigma$. There exists $v_\sigma\in C^{\infty}(M_\sigma)$ with $v_\sigma|_{\Omega_\sigma}=0$ such that $f=\bold dv_\sigma$ in $M_\sigma$. We first extend $v_\sigma$ a little bit near the boundary. Notice that $\p M$ is strictly magnetic convex and $\Sigma_\sigma\cap \p M$ is compact, by Theorem \ref{local 1-form}  and an argument similar to the one showing $\sigma<b$, one can find a neighborhood $O$ of $\Sigma_\sigma\cap \p M$ and $v_O\in C^{\infty}(O)$ such that $f=\bold dv_O$ and $v_O|_{\p M\cap O}=0$. Moreover, on the overlap $O\cap M_\sigma$, one can similarly show that $v_\sigma=v_O$ by choosing $O$ appropriately. This implies that we can actually define a smooth function $u$ on $U:=M_\sigma\cup O$. Thus now $f=\bold du$ in $U$, $u|_{\p U\cap \p M}=0$. This will allow us to avoid the set $\Sigma_\sigma\cap \p M$ for the rest of the proof.

With $U$ chosen as above, we see that $K:=\p U\cap M^{int}\cap \Sigma_\sigma$ is a compact subset of $\Sigma_\sigma\cap M^{int}$. Apply Theorem \ref{local 1-form} again, there exist $c>0$ small ($\sigma-c>a$) and an open neighborhood $V$ of $K$ in $\{\tau\leq \sigma\}\cap M^{int}$ such that the local invertibility of $I$ holds on $V$ and $(\{\sigma-c\leq \tau\leq \sigma\}\setminus O)\subset V$ (notice that $O$ is an open neighborhood of $\Sigma_\sigma\cap \p M$). In particular, the constant $c$ (which is related to the definition of the neighborhood for the local theorem) is uniform for $p\in \Sigma_t$ close to $K$ when $t$ is sufficiently close to $\sigma$, e.g. $|\sigma-t|\ll c$. Thus we pick $\sigma'>\sigma$ with $\sigma'-\sigma\ll c$, then there exists an open neighborhood $V'$ of $\Sigma_{\sigma'}\setminus O$ (compact) in $\{\tau\leq \sigma'\}\cap M^{int}$ such that the local invertibility holds in $V'$ and $(\{\sigma'-c\leq \tau \leq \sigma'\}\setminus O)\subset V'$. Obviously $\sigma'-c<\sigma$.


Now let $\phi$ be a smooth cut-off function on $M$, which satisfies $\phi\equiv 1$ near $\overline M_{\sigma'}$, supp$\phi\subset M_\sigma$, so $\phi u$ is well-defined on $M$. We denote $\tilde f=f-\bold d(\phi u)$, which is supported in $\{\tau<\sigma'\}$, by assumption $I\tilde f=0$. So we apply Theorem \ref{local 1-form} again to conclude that there is 
a smooth function $\tilde v$ defined in $V'$, such that $\tilde f=\bold d\tilde v$ in $V'$ and $\tilde v|_{V'\cap \Sigma_{\sigma'}}=0$. Moreover, on the overlap $V'\cap M_\sigma$, since $(1-\phi)u=\tilde v=0$ on $V'\cap \Sigma_{\sigma'}$, one easily obtains that $(1-\phi)u=\tilde v$ on the overlap too. Therefore, we get a smooth function $w$ on $U\cup V'$ with $f=\bold dw$ there and $w|_{\p M\cap U}=0$. In particular, this implies that $\sigma\leq \sigma'-c<\sigma$,
which is a contradiction. Thus $\sigma\leq a$ and the lemma is proved.
\end{proof}

\medskip

\begin{proof}[Proof of Theorem \ref{global result}] Note that the foliation condition implies that $M_a$ is non-trapping. On the other hand, since $\{\tau\leq a\}$ is non-trapping too, $M=M_a\cup \{\tau\leq a\}$ is non-trapping. As $\p M$ is strictly magnetic convex, by an argument similar to \cite[Proposition 5.2]{PSU12}, which is for the geodesic case, there exists $u\in C^{\infty}(SM)$ satisfying the following transport equation
\begin{equation}\label{transport equation on SM}
\bold G_{\mu}u=-f, \quad u|_{\p SM}=0.
\end{equation}
Thus by Lemma \ref{global up to M_a}
\begin{equation}\label{transport equation up to M_a}
\bold G_{\mu}(u+v)=0\,\, \mbox{in}\,\, M_a,\quad u+v|_{\p SM^{\Omega_a}}=0,
\end{equation}
where 
$$\p SM^{\Omega_a}:=\{(z,\xi)\in \p SM: z\in \Omega_a\}.$$
Since $\Sigma_t\cap M^{int}$ is strictly magnetic convex for $t\in (a,b]$, given arbitrary $(z,\xi)\in SM_a$, we can find a magnetic geodesic segment $\gamma:[0,T]\to M$ connecting $z$ with $\Omega_a$, which is completely contained in $M_a$, such that $(z,\xi)$ is either $(\gamma(0),\dot\gamma(0))$ or $(\gamma(T),\dot\gamma(T))$. Together with \eqref{transport equation up to M_a}, this implies that $u+v=0$ in $SM_a$, i.e. $u=-v$ is a smooth function on $M_a$. However, as $u\in C^{\infty}(SM)$ and the set $\{\tau\leq a\}$ has empty interior, we conclude that $u\in C^{\infty}(M)$. To show this, we take use of the spherical harmonics expansion of $u$ through the vertical Laplacian $\overset{v}{\Delta}$ on $SM$ as 
$$u=\sum_{k=0}^\infty u_k,$$
where each $u_k\in C^{\infty}(SM)$ satisfies $\overset{v}{\Delta} u_k=k(k+n-2)u_k$ ($n=\,\mbox{dim}\,M$). Note that this is an orthogonal decomposition of $u$ under the $L^2$ inner product, see e.g. \cite{PSU15} for more details. In particular, if $u\in C^{\infty}(M)$, then $u_k\equiv 0$ for all $k>0$. Since $u=-v$ on $M_a$, we get that $u_k=0$ on $SM_a$ for any $k>0$. Now given any $(z,v)\in S(M\setminus M_a)$, since $M\setminus M_a$ has empty interior, we can find a sequence $\{(z_j,v_j)\}_{j=1}^\infty\subset SM_a$ such that $(z_j,v_j)\to (z,v)$ as $j\to \infty$. Since $u_k(z_j,v_j)=0$ for any $j$ and $k>0$, $u_k(z,v)=0$ too for any $k>0$. Thus $u=u_0$ on $SM$, i.e. $u\in C^{\infty}(M)$. By \eqref{transport equation on SM}, $f=\bold G_{\mu}(-u)=\bold d(-u)$ on $M$ with $u|_{\p M}=0$, which completes the proof.
\end{proof}

\noindent {\it Remark:} It is possible to allow the existence of some type of trapped geodesics in the set $\{\tau\leq a\}$ under additional assumptions, which will still produce a smooth global solution to the transport equation \eqref{transport equation on SM}, see e.g. \cite[Proposition 5.5]{Gu14}.

\end{document}